\newtheorem{theorem}{Theorem}[section]
\newtheorem{definition}[theorem]{Definition}
\newtheorem{proposition}[theorem]{Proposition}
\newtheorem{problem}[theorem]{Problem}
\begin{document}

\author{Teodor Banica}
\address{T.B.: Department of Mathematics, Cergy-Pontoise University, 95000 Cergy-Pontoise, France. {\tt teodor.banica@u-cergy.fr}}

\author{Adam Skalski}
\address{A.S.: Institute of Mathematics of the Polish Academy of Sciences, ul. \'Sniadeckich 8, 00-956 Warszawa, Poland. {\tt a.skalski@impan.pl}}

\author{Piotr So\l tan}
\address{P.S.: Department of Mathematical Methods in Physics, Faculty of Physics, Warsaw University, Poland. {\tt piotr.soltan@fuw.edu.pl}}

\title[Noncommutative homogeneous spaces]
{Noncommutative homogeneous spaces: the matrix case}

\subjclass[2000]{46L65 (16W30, 46L87)}
\keywords{Homogeneous space, Easy quantum group}

\begin{abstract}
Given a quantum subgroup $G\subset U_n$ and a number $k\leq n$ we can form the homogeneous space $X=G/(G\cap U_k)$, and it follows from the Stone-Weierstrass theorem that $C(X)$ is the algebra generated by the last $n-k$ rows of coordinates on $G$. In the quantum group case the analogue of this basic result doesn't necessarily hold, and we discuss here its validity, notably with a complete answer in the group dual case. We focus then on the ``easy quantum group'' case, with the construction and study of several algebras associated to the noncommutative spaces of type $X=G/(G\cap U_k^+)$.
\end{abstract}

\maketitle

\section*{Introduction}

The notion of ``noncommutative space'' goes back to an old theorem of Gelfand, stating that any commutative unital $C^*$-algebra must be of the form $C(X)$, with $X$ compact Hausdorff space. In view of this fundamental result, a ``noncommutative compact space'' should be just the abstract categorical dual of an arbitrary unital $C^*$-algebra.

The interesting examples of such spaces abound. Connes proposed in \cite{co2} an axiomatic framework for the noncommutative (spin) manifolds. This notion covers a huge number of interesting situations, for instance the Standard Model one. See \cite{cco}, \cite{co1}.

Another class of interesting examples is provided by the compact quantum groups, axiomatized by Woronowicz in \cite{wo1}, \cite{wo2}, \cite{wo3}. This class covers for instance all the duals of discrete groups, as well as the compact forms of the Lie-type algebras of Drinfeld \cite{dri} and Jimbo \cite{jim}, at $q\in\mathbb R$. Note that by \cite{ntu} these latter quantum groups have a Dirac operator in the sense of Connes. The Drinfeld-Jimbo quantum groups at $|q|=1$ are not very far either, because they are known to correspond to tensor $C^*$-categories \cite{wen}.

Yet another class of interesting examples are provided by the noncommutative homogeneous spaces. For some pioneering axiomatization work and for various results in this direction, see Podle\'s \cite{pod} and Boca \cite{boc}, Tomatsu \cite{tom}, Vaes \cite{vae}, Kasprzak \cite{kas}. A number of key examples, mostly related to the various noncommutative spheres, were systematically investigated in a series of papers by Connes, D'Andrea, Dabrowski, Dubois-Violette, Khalkhali, Landi, van Suijlekom and Wagner \cite{cdu}, \cite{cla}, \cite{dd+}, \cite{kls}.

The starting point for the present paper is the following result:

\medskip
\noindent {\bf Theorem.} {\em Let $G\subset U_n$ be a closed subgroup, let $k\leq n$, and set $H=G\cap U_k$, where the embedding $U_k\subset U_n$ is given by $g\to diag(g,1_{n-k})$. Then $C(G/H)$, viewed as subalgebra of $C(G)$, is generated by the coordinate functions $u_{ij}(g)=g_{ij}$, with $i>k,j>0$.}
\medskip

This result follows indeed from the Stone-Weierstrass theorem, because the subalgebra $C_\times(G/H)\subset C(G/H)$ generated by $\{u_{ij}|i>k,j>0\}$ separates the points of $G/H$.

As a basic application, in the case $G=O_n$ and $k=n-1$ the above result tells us, modulo a standard symmetry argument, that each row of coordinates $\{u_{i1},\ldots,u_{in}\}$ on $O_n$ can be identified with the standard coordinates $\{x_1,\ldots,x_n\}$ on the sphere $S^{n-1}$.

In the quantum group case now, an analogue of the above statement can be formulated, in terms of Wang's quantum group $U_n^+$ \cite{wa1}. More precisely, if $G\subset U_n^+$ is a quantum subgroup and $k\leq n$, we can construct the quantum group $H=G\cap U_k^+$, and then the algebra $C(G/H)\subset C(G)$. Also, if we denote by $C_\times (G/H)\subset C(G)$ the $C^*$-algebra generated by the coordinates $\{u_{ij}|i>k,j>0\}$, then we have an inclusion $C_\times(G/H)\subset C(G/H)$, and the problem is whether this inclusion is an isomorphism or not.

Our first result is a complete answer to this question in the group dual case:

\medskip
\noindent {\bf Theorem A.} {\em Assume that $G=\widehat{\Gamma}$ is a discrete group dual, with $\Gamma=<g_1,\ldots,g_n>$, and with the embedding $\widehat{\Gamma}\subset U_n^+$ given by $u\to JDJ^*$, where $D=diag(g_i)$ and $J\in U_n$. Then $C_\times(G/H)\subset C(G/H)$ is an isomorphism iff $\Lambda\triangleleft\Gamma$, where $\Lambda=<g_r|\exists\,i>k, J_{ir}\neq 0>$.}
\medskip

This result is actually a consequence of a slightly more precise statement, saying that the inclusion $C_\times(G/H)\subset C(G/H)$ coincides with the inclusion $C^*(\Lambda)\subset C^*(\Lambda')$, where $\Lambda'\subset\Gamma$ is the normal closure of $\Lambda$. Observe also that in the case of diagonal embeddings ($J=1_n$) the normality condition in the statement is simply $<g_{k+1},\ldots,g_n>\triangleleft<g_1,\ldots,g_n>$.

With this theoretical result in hand, we will focus then on the case of ``easy quantum groups'' $G=(G_n)$, axiomatized in \cite{bsp}, and studied in \cite{ez1}, \cite{ez2}, \cite{ez3}, \cite{bsk}, \cite{csp}. The point is that the ``row algebras'' of type $C_\times(G/H)$ are of particular interest in the easy case:
\begin{enumerate}
\item First, $H=G_n\cap U_k^+$ is a quantum subgroup of $G_k$, and the inclusion $H\subset G_k$ itself is a quite interesting object: we will prove here, as part of Theorem B below, that $H=G_k$ is equivalent to a certain key combinatorial condition, introduced in \cite{ez2}.

\item Second, even in classical case, the quotient spaces of type $G_n/G_k$ are quite subtle. For instance one strategy in the exact computation of polynomial integrals over $O_n$ is that of examining the spaces $O_n/O_k$, with $r=n-k$ increasing. See \cite{bsc}.

\item At the quantum level now, the simplest case is $G=O_n^+$ and $k=n-1$, corresponding to the ``free spheres'' studied in \cite{bgo}. Also, the ``free hypergeometric laws'' in \cite{bbc} seem to come from spaces of type $S_r^+\backslash S_n^+/S_k^+$, not axiomatized yet.
\end{enumerate}

Summarizing, the problematics in the easy case, be it classical or free, is related to a number of recent computations and considerations. Our second result in this paper, dealing with the inclusions $C_\times(G/H)\subset C(G/H)$ in the easy case, is as follows:

\medskip
\noindent {\bf Theorem B.} {\em For an easy quantum group $G=(G_n)$ we have $G_k=G_n\cap U_k^+$ for any $k\leq n$ if and only if the category of partitions for $G$ is stable by removing blocks. If so is the case, and if $G$ is free (i.e. $S^+\subset G$), then $C_\times(G_n/G_k)\subset C(G_n/G_k)$ is in general proper.}
\medskip

We refer to sections 3-4 below for the precise statement here. The proof of the first assertion is purely combinatorial, using the results in \cite{bsp}, and the proof of the second assertion is inspired by Theorem A, by using a suitable group dual subgroup of $S_n^+$.

The above result raises the question of locating $C_\times(G/H)$ inside $C(G/H)$. We do not have results here, but as a further step in investigating $C_\times(G/H)$, we will construct and study a certain universal algebra $C_+(G/H)$, having $C_\times(G/H)$ as quotient.

Our result here is a bit technical. Let us recall from \cite{bsp} that in the free case there are exactly 6 easy quantum groups, namely $O_n^+,S_n^+,H_n^+,B_n^+$, which satisfy the combinatorial condition in Theorem B, plus two more quantum groups $S_n'^+,B_n'^+$, which do not satisfy it. So, the 4 quantum groups that we are interested in are the orthogonal, symmetric, hyperoctahedral and bistochastic groups $O_n^+,S_n^+,H_n^+,B_n^+$. These are defined by the fact that their fundamental corepresentation is orthogonal, magic, cubic and bistochastic, see \cite{bsp}. Now with these notions in hand, our third result is as follows:

\medskip
\noindent {\bf Theorem C.} {\em For $G=O^+,S^+,H^+,B^+$ let $C_+(G_n/G_k)$ be the universal $C^*$-algebra generated by the entries of a transposed $n\times(n-k)$ orthogonal, magic, cubic, stochastic isometry. Then $C_+(G_n/G_k)$ has the same abelianization and reduced version as $C_\times(G_n/G_k)$.}
\medskip

We refer to section 5 below for the precise statement. The proof uses the integration formula of Collins-\'Sniady \cite{csn}, as extended in \cite{bsp}, and a method from \cite{bgo}.

The problem of computing the kernel of $C_+(G_n/G_k)\to C_\times(G_n/G_k)$, which would be useful in connection with (1,2,3), seems to be a difficult linear algebra one, somehow in the spirit of those solved in \cite{cht}. There might be also some connections with \cite{cur}, \cite{so1}.

We also believe that the situation $C_+(G/H)\to C_\times(G/H)\subset C(G/H)$ can appear in more general contexts, and might help for the general understanding of noncommutative homogeneous spaces. We have several questions here, formulated in section 6 below.

The paper is organized as follows: 1-2 contain generalities about the spaces of type $G/(G\cap U_k^+)$, including the group dual case result, and in 3-4 we investigate the ``easy'' case. The final sections, 5-6, contain further results, and a few concluding remarks.

\subsection*{Acknowledgements.} This work was partly done during a visit of T.B. at the IMPAN, a visit of T.B. and P.S. at the ESI in Vienna, and a visit of A.S. at the Cergy University. T.B. was supported by the ANR grants ``Galoisint'' and ``Granma'', A.S. and P.S. were supported by the National Science Center (NCN) grant no.
2011/01/B/ST1/05011, and P.S. was partially supported by European Union grant PIRSES-GA-2008-230836.

\section{Quotient spaces}

We use the quantum group formalism developed by Woronowicz in \cite{wo1}, \cite{wo2}, \cite{wo3}. First, a ``Hopf $C^*$-algebra'' is a unital $C^*$-algebra $A$, together with a morphism of $C^*$-algebras $\Delta:A\to A\otimes A$, satisfying the coassociativity condition $(\Delta\otimes id)\Delta=(id\otimes\Delta)\Delta$ and the cobisimplifiability condition $\overline{span}\Delta(A)(A\otimes 1)=\overline{span}\Delta(A)(1\otimes A)=A\otimes A$. A morphism of Hopf $C^*$-algebras is a $C^*$-algebra morphism intertwining the comultiplications.

Given such a Hopf $C^*$-algebra $A$, we write $A=C(G)$, and we call $G$ ``compact quantum group''. There are two basic examples of compact quantum groups:
\begin{enumerate}
\item The compact groups $G$. Here the comultiplication is given by $\Delta f(g,h)=f(gh)$, by using the standard identification $C(G\times G)=C(G)\otimes C(G)$.

\item The discrete group duals $G=\widehat{\Gamma}$. Here $C(G)$ is by definition the group algebra $C^*(\Gamma)$, and the comultiplication is given by $\Delta(g)=g\otimes g$, for any $g\in\Gamma$.
\end{enumerate}

Note that in the above examples the square of the antipode is the identity. In fact, this condition will be satisfied by all quantum groups to be considered in this paper.

\begin{definition}
A left coaction of a compact quantum group $G$ on a unital $C^*$-algebra $A$ is a morphism of $C^*$-algebras $\alpha:A\to C(G)\otimes A$ satisfying the following conditions:
\begin{enumerate}
\item Coassociativity condition: $(id\otimes\alpha)\alpha=(\Delta\otimes id)\alpha$.

\item Non-degeneracy: $\alpha(A)(C(G)\otimes 1)$ is dense in $C(G)\otimes A$.
\end{enumerate}
\end{definition}

In the case where $G$ is a usual group, and the algebra $A=C(X)$ is commutative, such a coaction must be of the form $\alpha f(g,x)=f(g(x))$, for a certain action of $G$ on $X$.

In general, the basic example of coaction is the comultiplication map $\Delta$, corresponding to the ``action of $G$ on itself''. Observe that, in the particular case where $G$ is a classical group, this coaction comes indeed from the action $g(h)=gh$ of $G$ on itself.

More generally, assume that $H\subset G$ is a quantum subgroup, in the sense that we have a surjective morphism of Hopf $C^*$-algebras $\pi:C(G)\to C(H)$. Then the formula $\alpha=(\pi\otimes id)\Delta$ defines a coaction $\alpha:C(G)\to C(H)\otimes C(G)$. In the particular case where $G$ is a classical group, this coaction comes from the action $h(g)=hg$ of $H$ on $G$.

We recall that the fixed point algebra of a coaction is $A^\alpha=\{f\in A|\alpha f=1\otimes f\}$. Since in the classical case the condition $\alpha f(g,x)=(1\otimes f)(g,x)$ reads $f(g(x))=f(x)$, in this case we have an identification $C(X)^\alpha=C(X/G)$, where $X/G=\{Gx|x\in X\}$.

\begin{definition}
Let $H\subset G$ be a quantum subgroup of a compact quantum group, so that we have a surjective morphism of Hopf $C^*$-algebras $\pi:C(G)\to C(H)$. We let
$$C(G/H)=\{f\in C(G)|(\pi\otimes id)\Delta f=1\otimes f\}$$
be the fixed point algebra of the canonical coaction $\alpha:C(G)\to C(H)\otimes C(G)$.
\end{definition}

As a first remark, in the case where $G$ is a classical group, $C(G/H)$ is indeed the algebra of continuous functions on the quotient space $G/H$. Indeed, according to the above discussion, $C(G/H)$ consists of the functions $f\in C(G)$ which are invariant under the action $h(g)=hg$ of $H$ on $G$, i.e. which satisfy $f(g)=f(hg)$, for any $g\in G$ and $h\in H$. But this is the same as saying that $f$ has to be constant on each right coset $Hg$, so the algebra formed by these fuctions is simply $C(G/H)$, where $G/H=\{Hg|g\in G\}$.

\begin{proposition}
The noncommutative space $G/H$ is as follows:
\begin{enumerate}
\item For an inclusion of compact groups $H\subset G$ we have $G/H=\{Hg|g\in G\}$.

\item For a group dual inclusion $\widehat{\Lambda}\subset\widehat{\Gamma}$ we have $\widehat{\Gamma}/\widehat{\Lambda}=\widehat{\Theta}$, with $\Theta=\ker(\Gamma\to\Lambda)$.
\end{enumerate}
\end{proposition}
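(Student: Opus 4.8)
The plan is to handle the two cases separately. Case (1) is essentially a restatement of the discussion immediately preceding the proposition, so it requires almost no new work; case (2) needs a short computation with the Haar state of the quantum subgroup.

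For (1), I would simply invoke the analysis already carried out above the statement. For a classical inclusion $H\subset G$ the canonical coaction is $\alpha f(h,g)=f(hg)$, so the fixed point condition $\alpha f=1\otimes f$ becomes $f(hg)=f(g)$ for all $h\in H$ and $g\in G$. This says exactly that $f$ is constant on each right coset $Hg$, hence descends to a continuous function on the compact quotient space $\{Hg\,|\,g\in G\}$, and conversely every such function lifts to a fixed point. Thus $C(G)^\alpha=C(G/H)$ with $G/H=\{Hg\,|\,g\in G\}$, which is the claim.

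For (2), I would first unwind the group dual inclusion. A surjective morphism of Hopf $C^*$-algebras $\pi:C^*(\Gamma)\to C^*(\Lambda)$ is the same datum as a surjection of discrete groups $p:\Gamma\to\Lambda$, acting on generators by $\pi(g)=p(g)$; this is precisely the inclusion $\widehat{\Lambda}\subset\widehat{\Gamma}$, and $\Theta=\ker p$ is a normal subgroup of $\Gamma$. Since $\Delta(g)=g\otimes g$, the canonical coaction $\alpha=(\pi\otimes id)\Delta:C^*(\Gamma)\to C^*(\Lambda)\otimes C^*(\Gamma)$ acts on generators by $\alpha(g)=p(g)\otimes g$. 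The key step is then to identify the fixed point algebra $C^*(\Gamma)^\alpha=C(\widehat{\Gamma}/\widehat{\Lambda})$ with the range of the averaging map $E=(\varphi_\Lambda\otimes id)\alpha$, where $\varphi_\Lambda$ is the Haar state of $C^*(\Lambda)$, given on group elements by $\varphi_\Lambda(\lambda)=\delta_{\lambda,1}$. A standard computation, using coassociativity and the invariance of the Haar state, shows that $E$ is an idempotent with range exactly $C^*(\Gamma)^\alpha$. On generators one finds $E(g)=\varphi_\Lambda(p(g))\,g=\delta_{p(g),1}\,g$, so $E(g)=g$ when $g\in\Theta$ and $E(g)=0$ otherwise. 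This is exactly the canonical conditional expectation of $C^*(\Gamma)$ onto the subalgebra $C^*(\Theta)$, whence $C(\widehat{\Gamma}/\widehat{\Lambda})=\mathrm{Ran}(E)=C^*(\Theta)=C(\widehat{\Theta})$.

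The main obstacle is the passage from the algebraic level to the $C^*$-level. Comparing coefficients of the linearly independent group elements shows directly that, inside the dense group algebra $\mathbb{C}[\Gamma]$, the fixed vectors are spanned by $\{g\,|\,g\in\Theta\}$; but to conclude that the full norm-closed fixed point algebra is all of $C^*(\Theta)$, rather than merely the closure of this span, I rely on the general identity $A^\alpha=\mathrm{Ran}(E)$ for the Haar averaging $E$, together with faithfulness of the canonical trace, which guarantees that $E$ restricts to the honest (faithful) conditional expectation onto $C^*(\Theta)$ obtained by killing the group elements outside $\Theta$. With these two facts in place the identification is clean, and the normality of $\Theta$ in $\Gamma$ records that the resulting space is itself a group dual.
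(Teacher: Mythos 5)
Your proof is correct, and while part (1) matches the paper verbatim (the paper also just invokes the preceding discussion), for part (2) you take a genuinely different route. The paper argues directly on the fixed-point equation: writing $f=\sum_g\lambda_g\cdot g$, it computes $(\tilde{\pi}\otimes id)\Delta f=\sum_g\lambda_g\,\pi(g)\otimes g$, compares coefficients with $1\otimes f=\sum_g\lambda_g\,1\otimes g$, and concludes that $f$ is fixed iff its support lies in $\Theta=\ker\pi$ --- a one-line comparison, but one that strictly speaking lives on the dense span of group elements, since a general element of $C^*(\Gamma)$ is not a norm-convergent sum $\sum_g\lambda_g g$. You instead use the averaging identity $A^\alpha=\mathrm{Ran}\bigl((h\otimes id)\alpha\bigr)$ --- a standard fact the paper itself states and exploits, but only later, in the proof of Theorem 1.5 --- and compute $E(g)=\delta_{p(g),1}\,g$. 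Your route buys a cleaner handling of exactly the closure issue the paper glosses over: $E$ is a bounded idempotent, so its range is closed and equals the closed span of $\Theta$ inside $C^*(\Gamma)$; the paper's route is more elementary and self-contained. One caveat on your last step: the canonical trace on the full group $C^*$-algebra $C^*(\Gamma)$ is faithful only when $\Gamma$ is amenable, so ``faithfulness of the canonical trace'' is not a valid justification in the universal setting (it is fine in the reduced one). Fortunately you do not need it: boundedness and idempotency of $E$ already give $\mathrm{Ran}(E)=\overline{\mathbb{C}[\Theta]}$, and the identification of this closed span with $C^*(\Theta)$ follows from the standard isometric embedding $C^*(\Theta)\subset C^*(\Gamma)$ for subgroups of discrete groups (via the induce-then-restrict argument) --- a subtlety the paper's own proof does not address either, so your argument, with that justification repaired, is if anything the more careful of the two.
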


\begin{proof}
(1) This follows from the above discussion.

(2) Assume indeed that we have a surjective group morphism $\pi:\Gamma\to\Lambda$, and consider its canonical extension $\tilde{\pi}:C^*(\Gamma)\to C^*(\Lambda)$, which can be viewed as a morphism $\tilde{\pi}:C(\widehat{\Gamma})\to C(\widehat{\Lambda})$. Then for any element $f=\sum\lambda_g\cdot g$ of the algebra on the left, we have:
\begin{eqnarray*}
(\tilde{\pi}\otimes id)\Delta(f)
&=&\sum_{g\in\Gamma}\lambda_g\cdot\pi(g)\otimes g\\
1\otimes f
&=&\sum_{g\in\Gamma}\lambda_g\cdot 1\otimes g
\end{eqnarray*}

Thus we have $f\in C(\widehat{\Gamma}/\widehat{\Lambda})$ if and only if $\lambda_g\pi(g)=\lambda_g1$ for any $g\in\Gamma$, i.e. if and only if the support of $f$ is contained in the group $\Theta=\ker(\pi)$, and this gives the result.
\end{proof}

In order to understand the canonical action of $G$ on $G/H$, we need to introduce the notion of right coaction: this is by definition a morphism of $C^*$-algebras $\alpha:A\to A\otimes C(G)$ satisfying $(\alpha\otimes id)\alpha=(id\otimes\Delta)\alpha$, and such that $\alpha(A)(1\otimes C(G))$ is dense in $A\otimes C(G)$.

Assuming that we have such a coaction, we say that the underlying noncommutative space has a ``unique $G$-invariant probability measure'' if there exists a unique positive faithful unital state $\varphi:A\to\mathbb C$ satisfying the invariance condition $(\varphi\otimes id)\alpha=\varphi(.)1$.

\begin{proposition}
The noncommutative space $G/H$ has the following properties:
\begin{enumerate}
\item The right action of $G$ on itself restricts to an action on $G/H$.

\item There is a unique $G$-invariant probability measure on $G/H$.
\end{enumerate}
\end{proposition}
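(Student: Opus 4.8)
The right coaction to be restricted is the comultiplication $\Delta:C(G)\to C(G)\otimes C(G)$ itself, which is a right coaction by coassociativity and cobisimplifiability. For part (1) the plan is to show that $\Delta$ maps $C(G/H)$ into $C(G/H)\otimes C(G)$. I would first characterise $C(G/H)\otimes C(G)$ as the fixed point algebra of the ampliated left coaction $\alpha\otimes id$, where $\alpha=(\pi\otimes id)\Delta$ is the canonical coaction of $H$. The key computation is the operator identity
\[
(\alpha\otimes id)\Delta=(\pi\otimes id\otimes id)(\Delta\otimes id)\Delta=(\pi\otimes id\otimes id)(id\otimes\Delta)\Delta=(id\otimes\Delta)\alpha ,
\]
where the middle equality is coassociativity. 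Applying this to $f\in C(G/H)$ and using $\alpha f=1\otimes f$ gives $(\alpha\otimes id)\Delta f=(id\otimes\Delta)(1\otimes f)=1\otimes\Delta f$, which says exactly that $\Delta f$ lies in the fixed point algebra.

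To justify the identification of that fixed point algebra with $C(G/H)\otimes C(G)$, I would bring in the averaging conditional expectation $E=(h_H\otimes id)\alpha:C(G)\to C(G)$ onto $C(G/H)$, where $h_H$ is the Haar state of $H$. Its ampliation $E\otimes id$ is a projection onto $C(G/H)\otimes C(G)$, and any element $x$ with $(\alpha\otimes id)x=1\otimes x$ satisfies $(E\otimes id)x=(h_H\otimes id\otimes id)(\alpha\otimes id)x=x$, so it lies in the range; this pins down the fixed points as $C(G/H)\otimes C(G)$ and finishes the restriction. For the non-degeneracy of the restricted coaction I would prove the intertwining relation $(E\otimes id)\Delta=\Delta E$ (again a short coassociativity computation), and then apply $E\otimes id$ to the cobisimplifiability identity $\overline{span}\,\Delta(C(G))(1\otimes C(G))=C(G)\otimes C(G)$; since $E$ is onto $C(G/H)$, this yields $\overline{span}\,\Delta(C(G/H))(1\otimes C(G))=C(G/H)\otimes C(G)$, completing (1).

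For the existence part of (2), I would take $\varphi$ to be the restriction to $C(G/H)$ of the Haar state $h$ of $G$. It is automatically a positive unital state, and faithful on the reduced version since $h$ is; the invariance $(\varphi\otimes id)\Delta f=\varphi(f)1$ for $f\in C(G/H)$ is immediate from the left invariance $(h\otimes id)\Delta=h(\cdot)1$ of the Haar state, now that part (1) guarantees $\Delta f\in C(G/H)\otimes C(G)$ so that $\varphi\otimes id$ may be applied to $\Delta f$.

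For uniqueness, let $\psi$ be any invariant state on $C(G/H)$. The idea is a Fubini argument: for $f\in C(G/H)$ I would evaluate $(\psi\otimes h)\Delta f$ in two ways. Invariance of $\psi$ gives $(\psi\otimes h)\Delta f=h\bigl((\psi\otimes id)\Delta f\bigr)=\psi(f)$, while right invariance $(id\otimes h)\Delta=h(\cdot)1$ of the Haar state gives $(\psi\otimes h)\Delta f=\psi\bigl((id\otimes h)\Delta f\bigr)=h(f)=\varphi(f)$; hence $\psi=\varphi$. The main obstacle I anticipate is not in these manipulations, which are essentially formal, but in the careful step of part (1): identifying the fixed points of $\alpha\otimes id$ with $C(G/H)\otimes C(G)$ and proving the density (non-degeneracy) condition genuinely require the Haar state of $H$ and the conditional expectation $E$, rather than the purely algebraic use of coassociativity.
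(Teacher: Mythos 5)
Your proof is correct, and its core coincides with the paper's: the same coassociativity computation turning $(\alpha\otimes id)\Delta f$ into $1\otimes\Delta f$ for part (1), and for part (2) the same choice $\varphi=h|_{C(G/H)}$ together with the same Fubini-type uniqueness argument — your two evaluations of $(\psi\otimes h)\Delta f$ are exactly the paper's chain $\varphi(x)=h((\varphi\otimes id)\alpha(x))=\varphi((id\otimes h)\alpha(x))=h(x)$. Where you genuinely add something is in proving the two steps the paper outsources: the paper passes directly from $(\alpha\otimes id)\Delta f=1\otimes\Delta f$ to $\Delta f\in C(G/H)\otimes C(G)$ (implicitly using the standard fact, quoted only later in the proof of Theorem 1.5, that the fixed point algebra of a coaction equals the range of $(h\otimes id)\alpha$), and it refers to Podle\'s for the non-degeneracy axiom. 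Your conditional expectation $E=(h_H\pi\otimes id)\Delta$ is exactly the right tool for the first point: since $E\otimes id$ is unital completely positive and idempotent, it is by Tomiyama's theorem a conditional expectation onto $C(G/H)\otimes C(G)$, so $(E\otimes id)x=x$ does place $x$ in that subalgebra — and this route deliberately avoids any slice-map subtleties for the minimal tensor product, which is why the expectation rather than mere slicing is needed, as you correctly sense. Your density argument is also sound: $(E\otimes id)\Delta=\Delta E$ follows from coassociativity with $\omega=h_H\pi$, and the bimodule property $(E\otimes id)\bigl(x(1\otimes b)\bigr)=\bigl((E\otimes id)x\bigr)(1\otimes b)$ lets you push $E\otimes id$ through $\overline{span}\,\Delta(C(G))(1\otimes C(G))=C(G)\otimes C(G)$ to obtain $\overline{span}\,\Delta(C(G/H))(1\otimes C(G))=C(G/H)\otimes C(G)$. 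Two small points you should make explicit if you write this up: that $E$ is a norm-one projection onto $C(G/H)$ itself requires the left-coaction coassociativity $(id\otimes\alpha)\alpha=(\Delta_H\otimes id)\alpha$ together with invariance of $h_H$, giving $\alpha(Ef)=1\otimes Ef$; and faithfulness of $\varphi$, formally demanded by the paper's definition of invariant measure, holds (as you note) only at the reduced level — a point the paper glosses over as well.
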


\begin{proof}
This is known from Podle\'s \cite{pod} and Boca \cite{boc}, here are just a few details:

(1) We first check that the algebra $C(G/H)\subset C(G)$ is invariant under $\Delta$, i.e. that we have $\Delta(C(G/H))\subset C(G/H)\otimes C(G)$. So, let $f\in C(G/H)$. We have:
\begin{eqnarray*}
(((\pi\otimes id)\Delta)\otimes id)\Delta f
&=&(\pi\otimes id\otimes id)(\Delta\otimes id)\Delta f\\
&=&(\pi\otimes id\otimes id)(id\otimes\Delta)\Delta f\\
&=&(id\otimes\Delta)(\pi\otimes id)\Delta f\\
&=&(id\otimes\Delta)(1\otimes f)\\
&=&1\otimes\Delta f
\end{eqnarray*}

Thus $\Delta f\in C(G/H)\otimes C(G)$. For the non-degeneracy axiom, see Podle\'s \cite{pod}.

(2) For the existence part, we can simply take $\varphi$ to be the restriction of the Haar state $h:C(G)\to\mathbb C$, constructed by Woronowicz in \cite{wo1}. For the uniqueness now, we have:
$$\varphi(x)=h(\varphi(x)1)=h((\varphi\otimes id)\alpha(x))=\varphi((id\otimes h)\alpha(x))=\varphi(h(x)1)=h(x)$$

Thus we have $\varphi=h$, and this finishes the proof.
\end{proof}

We investigate now the functoriality properties of the operation $(G,H)\to G/H$.

Two closed subgroups $H,H'\subset G$ are called isomorphic, and we write $H\simeq H'$, if there exists a Hopf $C^*$-algebra isomorphism $\theta:C(H)\to C(H')$ such that $\theta\pi=\pi'$.

\begin{theorem}
Let $H,H'\subset G$ be quantum subgroups of a compact quantum group.
\begin{enumerate}
\item We have $H\simeq H'$ if and only if $C(G/H)=C(G/H')$.

\item If $H'\subset H$ is proper, then $C(G/H)\subset C(G/H')$ is proper.
\end{enumerate}
\end{theorem}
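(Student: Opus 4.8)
The plan is to obtain (2) as a consequence of (1), and to prove (1) by reconstructing the subgroup $H$ --- more precisely the ideal $\ker\pi$ --- from the subalgebra $C(G/H)$. Write $\pi:C(G)\to C(H)$ and $\pi':C(G)\to C(H')$ for the defining surjections and $\epsilon$ for the counit of $C(G)$. The ``only if'' part of (1) is routine: if $\theta:C(H)\to C(H')$ is a Hopf $C^*$-isomorphism with $\theta\pi=\pi'$, then since $\theta\otimes id$ is unital and injective the relation $(\pi'\otimes id)\Delta f=1\otimes f$ is equivalent to $(\theta\otimes id)\big((\pi\otimes id)\Delta f-1\otimes f\big)=0$, hence to $(\pi\otimes id)\Delta f=1\otimes f$, whence $C(G/H')=C(G/H)$.

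For the ``if'' part I would show that $B:=C(G/H)$ determines $\ker\pi$. One inclusion is immediate: applying $id\otimes\epsilon$ to $(\pi\otimes id)\Delta b=1\otimes b$ and using $(id\otimes\epsilon)\Delta=id$ gives $\pi(b)=\epsilon(b)1$ for every $b\in B$, so the closed two-sided ideal $I_B$ generated by $\{b-\epsilon(b)1:b\in B\}$ satisfies $I_B\subseteq\ker\pi$. The heart of the matter is the reverse inclusion $\ker\pi\subseteq I_B$, i.e. that $B$ is large enough to cut out all of $\ker\pi$. To prove it I would pass to the dense Peter-Weyl Hopf $*$-algebra $\mathcal O(G)\subset C(G)$, on which $\pi$, $\Delta$, $\epsilon$ and the Haar functional $h_H$ of $H$ all behave algebraically. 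The conditional expectation $E=(h_H\pi\otimes id)\Delta$ onto $C(G/H)$ preserves $\mathcal O(G)$, so $\mathcal B:=\mathcal O(G)\cap B=E(\mathcal O(G))$ is dense in $B$ and equals the algebraic coinvariants $\mathcal O(G)^{\mathrm{co}\,H}$. The desired statement then reduces to the identity $\ker(\pi|_{\mathcal O(G)})=\mathcal B^+\mathcal O(G)$ with $\mathcal B^+=\mathcal B\cap\ker\epsilon$, equivalently $\mathcal O(H)\cong\mathcal O(G)/\mathcal B^+\mathcal O(G)$; this is the faithful-flatness (coideal subalgebra) correspondence for Hopf algebras with a Haar functional, in the cosemisimple compact case. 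Taking closures and using density of $\mathcal B$ in $B$ then yields $\ker\pi=I_B$. I expect this faithful-flatness input --- that the coinvariants recover the \emph{full} kernel --- to be the main obstacle, the remaining steps being formal. Granting $\ker\pi=I_B$, the hypothesis $C(G/H)=C(G/H')$ gives $\ker\pi=\ker\pi'$, so $a+\ker\pi\mapsto\pi'(a)$ defines a Hopf $C^*$-isomorphism $\theta:C(H)\to C(H')$ with $\theta\pi=\pi'$, i.e. $H\simeq H'$.

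Finally, for (2), if $H'\subset H$ then $\pi'=\rho\pi$ for a surjective Hopf $C^*$-morphism $\rho:C(H)\to C(H')$; applying $\rho\otimes id$ to the defining relation of $C(G/H)$ shows at once that $C(G/H)\subseteq C(G/H')$. Were this an equality, the reconstruction in (1) would force $\ker\pi=\ker\pi'$; but $\rho\pi=\pi'$ with $\pi$ surjective gives $\pi^{-1}(\ker\rho)=\ker\pi'=\ker\pi$, so $\ker\rho=\{0\}$ and $\rho$ is an isomorphism, contradicting the properness of $H'\subset H$. Hence $C(G/H)\subsetneq C(G/H')$.
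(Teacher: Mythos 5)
Your proposal is correct in substance, but it takes a genuinely different route from the paper's. The paper reconstructs $H$ from $C(G/H)$ through the idempotent state $\omega=h_H\pi$: it first shows that $C(G/H)$ determines the conditional expectation $E_\omega=(\omega\otimes id)\Delta$ and hence $\omega$ itself (if $\omega\neq\omega'$, one picks $b\neq 0$ in the image of $E_\omega|_{R(G)}$ with $E_{\omega'}(b)=0$; since $E_{\omega'}$ preserves $h_G$, one gets $h_G(b^*c')=0$ for all $c'$ in the image of $E_{\omega'}$, while $C(G/H)=C(G/H')$ lets $b$ be norm-approximated by such $c'$, contradicting faithfulness of $h_G$ on $R(G)$), and then invokes the idempotent-state machinery of \cite{psk} (Theorem 3.7 there) to identify $R(H)$ with the quotient of $R(G)$ by $R(G)\cap N_\omega$. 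You instead reconstruct the defining Hopf ideal directly, via the identity $\ker(\pi|_{\mathcal O(G)})=\mathcal B^+\mathcal O(G)$, and you correctly isolate this as the crux. It is indeed a true theorem in the compact (cosemisimple) case: since $\mathcal O(H)$ is cosemisimple, $\mathcal O(G)$ is an injective, hence coflat, $\mathcal O(H)$-comodule, faithfully so because $\pi$ is surjective, and Takeuchi's faithful-coflatness correspondence (established in the CQG setting by M\"uller and Schneider; see also Chirvasitu) yields exactly $\mathcal O(H)\cong\mathcal O(G)/\mathcal B^+\mathcal O(G)$. Your route buys a cleaner, purely Hopf-algebraic reconstruction and bypasses the uniqueness-of-expectation analysis; the paper's route stays inside the $C^*$-algebraic idempotent-state framework of \cite{fst}, \cite{psk}, which it needs anyway for the coamenable discussion.

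Two details deserve attention. First, the counit of $C(G)$ is in general only densely defined, so to obtain $\pi(b)=\epsilon(b)1$ you should argue first for $b\in\mathcal B=B\cap R(G)$ and then extend by density of $\mathcal B$ in $B$ and continuity of $\pi$ --- harmless. Second, and more substantially, your final step of ``taking closures'' assumes $\ker\pi=\overline{\ker(\pi|_{\mathcal O(G)})}$, which can fail for non-universal completions: for nonamenable $\Gamma$ the regular representation $C^*(\Gamma)\to C^*_r(\Gamma)$ is a surjective Hopf $C^*$-algebra morphism whose kernel meets $\mathbb C[\Gamma]$ trivially, so there $I_B=0\neq\ker\pi$. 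Note, however, that this very example shows the theorem as literally stated requires a convention (working with universal completions, or identifying subgroups carrying the same representative-function data), and the paper's own general-case proof likewise terminates at the level of an isomorphism of the algebras $R(H)$, $R(H')$. Under the universal convention your argument does close: every $*$-representation of $\mathcal O(H)$ is bounded, so $C(G)/\,\overline{\mathcal B^+\mathcal O(G)}$ and $C(H)$ are both the universal $C^*$-completion of $\mathcal O(H)$, whence $\ker\pi=I_B$. So the gap you leave coincides with an assumption implicit in the paper, rather than being a flaw specific to your approach; and your derivation of (2) from (1), via $\theta\pi=\rho\pi$ forcing $\rho$ to be injective, matches the paper's one-line reduction and is fine.
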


\begin{proof}
For a compact quantum group $G$ we denote by $R(G)$ its algebra of representative functions, i.e. of coefficients of finite dimensional unitary representations of $G$.

We use the standard fact that the fixed point algebra of a coaction $\alpha:A\to C(G)\otimes A$ is given by $A^\alpha=(h\otimes id)\alpha(A)$, where $h:C(G)\to\mathbb C$ is the Haar state.

(1) If $H\simeq H'$ then we trivially have $C(G/H)=C(G/H')$. So, assume that $C(G/H)=C(G/H')$. If $G$ was coamenable, the proof would have followed directly from \cite{fst} and \cite{psk} (or \cite{so2}) as follows: the algebra $C(G/H)$ is an expected $C^*$-subalgebra of $C(G)$, with the conditional expectation onto $C(G/H)$ preserving the Haar state given by the formula $E_\omega=(\omega\otimes id)\Delta$, with $\omega=h_H\pi$. The algebra $C(H)$ (note that $H$ is automatically coamenable, see for example \cite{sal}) is then isomorphic, as the algebra of continuous functions on a quantum group, to the algebra $C(G)/N_\omega$, where $N_\omega=\{f\in C(G)|\omega(f^*f)=0\}$. Now it suffices to observe that as the Haar state on a coamenable quantum group is faithful, the conditional expectation onto $C(G/H)$ preserving $h_G$ is uniquely determined. Hence the equality $C(G/H)=C(G/H')$ implies that for the relevant conditional expectations we have $E_\omega=E_{\omega'}$, and applying the counit, $\omega=\omega'$. Hence $C(G)/N_{\omega}=C(G)/N_{\omega'}$ and the proof of the special case where $G$ is coamenable is finished.

In the general case now, assume that $C(G/H)=C(G/H')$, and assume first that we have $\omega\neq\omega'$. Then $E_\omega\neq E_{\omega'}$ and by density $E_\omega|_{R(G)}\neq E_{\omega'}|_{R(G)}$. Denote the images of the expectations on both sides of the last formula by $B,B'$, and let $b\in B-B'$. As $E_{\omega'}|_{R(G)}$ is a conditional expectation onto $B'$, we can assume that $b\neq 0$ and $E_{\omega'}(b) = 0$. As $E_{\omega'}$ preserves the Haar state $h_G$, we must have $h_G(b^*c')=0$ for all $c' \in B'$. But the assumption $C(G/H)=C(G/H')$ implies that $b$ can be approximated in norm by the elements in $B'$. Thus $h_G(b^*b)=0$, which contradicts the faithfulness of $h_G$ on $R(G)$.

Thus we have $\omega=\omega'$. Now by using for instance the arguments in Theorem 3.7 of \cite{psk} we can show that $R(H)$ is isomorphic to the quotient of $R(G)$ by $R(G) \cap N_{\omega}$. From that we see that $H\simeq H'$, and we are done.

(2) This follows from (1).
\end{proof}

For further information on noncommutative homogeneous spaces, see Podle\'s \cite{pod}.

\section{The matrix case}

As explained in the introduction, given a compact group $G\subset U_n$ and a number $k\leq n$, we can consider the compact group $H=G\cap U_k$, where the embedding $U_k\subset U_n$ is given by $g\to diag(g,1_{n-k})$, and then form the homogeneous space $X=G/H$.

\begin{proposition}
Let $G\subset U_n$ be a closed subgroup, and let $H=G\cap U_k$. Then $C(G/H)$, viewed as subalgebra of $C(G)$, is generated by the last $n-k$ rows of coordinates on $G$.
\end{proposition}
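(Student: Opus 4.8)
The plan is to identify $C(G/H)$ with the continuous functions on the compact quotient of right cosets, and then to apply the Stone--Weierstrass theorem to the unital $C^*$-subalgebra $C_\times(G/H)\subset C(G)$ generated by the entries $u_{ij}$ with $i>k$. By the discussion preceding the statement, $C(G/H)$ consists precisely of the functions $f\in C(G)$ that are constant on each right coset $Hg$, so it is naturally $C(Y)$ with $Y=\{Hg\mid g\in G\}$ the orbit space; this is compact Hausdorff since $G\subset U_n$ is compact and $H$ acts continuously. Because $C_\times(G/H)$ is a unital $C^*$-subalgebra of $C(G)$ it is automatically closed under adjoints, hence a $*$-subalgebra containing the constants, so Stone--Weierstrass will apply once I check that it is contained in $C(G/H)$ and separates the points of $Y$.

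First I would verify $C_\times(G/H)\subset C(G/H)$. For $h\in H=G\cap U_k$ we have $h=\mathrm{diag}(h',1_{n-k})$ with $h'\in U_k$, so the $i$-th row of $h$ is the standard basis vector $e_i$ whenever $i>k$. Hence for such $i$, $u_{ij}(hg)=(hg)_{ij}=\sum_l h_{il}g_{lj}=g_{ij}=u_{ij}(g)$, which shows that each generator $u_{ij}$ with $i>k$ is constant on right cosets and therefore lies in $C(G/H)$. As $C(G/H)$ is a $C^*$-subalgebra, all of $C_\times(G/H)$ is contained in it.

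The heart of the argument is the separation of points, which I expect to be the main obstacle and where unitarity of $G\subset U_n$ is essential. Suppose $g,g'\in G$ have the same last $n-k$ rows, i.e. $u_{ij}(g)=u_{ij}(g')$ for all $i>k$ and all $j$; I must show $Hg=Hg'$. Put $w=g'g^{-1}\in G\subset U_n$, so that $g'=wg$. Writing $R_1,\dots,R_n$ for the rows of $g$, which form an orthonormal basis of $\mathbb C^n$ since $g$ is unitary, the $i$-th row of $wg$ equals $\sum_l w_{il}R_l$; the hypothesis forces this to equal $R_i$ for every $i>k$, whence $w_{il}=\delta_{il}$ for $i>k$ by linear independence. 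Thus the last $n-k$ rows of $w$ are $e_{k+1},\dots,e_n$, and unitarity of $w$ then forces its first $k$ rows to vanish in the last $n-k$ columns, so $w=\mathrm{diag}(w',1_{n-k})$ with $w'\in U_k$. Therefore $w\in G\cap U_k=H$ and $Hg'=Hwg=Hg$, as required.

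Finally, with separation established, Stone--Weierstrass gives that $C_\times(G/H)$ is dense in $C(Y)=C(G/H)$, and being norm-closed it coincides with $C(G/H)$. The only delicate point worth double-checking in writing up is the passage from ``$w$ has last $n-k$ rows equal to $e_{k+1},\dots,e_n$'' to the block-diagonal form, which rests on the orthonormality of the rows of a unitary matrix; everything else is routine bookkeeping.
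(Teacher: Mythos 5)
Your proof is correct and follows essentially the same route as the paper: an application of the Stone--Weierstrass theorem, with separation of cosets reduced to showing that $w=g'g^{-1}$ must be block-diagonal, hence in $H=G\cap U_k$, via orthonormality of the rows of a unitary matrix. You merely spell out the computation that the paper compresses into ``the usual matrix formula of $gh^{-1}$ and the fact that $g,h$ are unitary.''
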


\begin{proof}
Let $u_{ij}\in C(G)$ be the standard coordinates on $G$, given by $u_{ij}(g)=g_{ij}$, and let $A$ be the algebra generated by the functions $\{u_{ij}|i>k,j>0\}$. Since each $u_{ij}$ with $i>k$ is constant on each coset $Hg\in G/H$, we have an inclusion $A\subset C(G/H)$.

In order to prove that this inclusion in a isomorphism, we use the Stone-Weierstrass theorem: it is enough to show that the functions $\{u_{ij}|i>k,j>0\}$ separate the cosets $\{Hg|g\in G\}$. But this is the same as saying that $Hg\neq Hh$ implies that $g_{ij}\neq h_{ij}$ for some $i>k,j>0$, or, equivalently, that $g_{ij}=h_{ij}$ for any $i>k,j>0$ implies that we have $Hg=Hh$. Now since $Hg=Hh$ is equivalent to $gh^{-1}\in H$, the result follows from the usual matrix formula of $gh^{-1}$, and from the fact that $g,h$ are unitary.
\end{proof}

In order to deal with the quantum case, we use a construction of Wang \cite{wa1}. Let us call ``biunitary'' any unitary matrix $u=(u_{ij})$, whose transpose $u^t=(u_{ji})$ is also unitary.

\begin{definition}
The quantum group $U_n^+$ is the abstract dual of $C(U_n^+)$, the universal $C^*$-algebra generated by the entries of a $n\times n$ biunitary matrix.
\end{definition}

Observe that $C(U_n^+)$ is a Hopf $C^*$-algebra in the sense of Woronowicz \cite{wo1}, \cite{wo2}, with comultiplication $\Delta(u_{ij})=\sum_ku_{ik}\otimes u_{kj}$, counit $\varepsilon(u_{ij})=\delta_{ij}$ and antipode $S(u_{ij})=u_{ji}^*$. Note that the square of the antipode is the identity, $S^2=id$. See Wang \cite{wa1}.

As in the classical case, let $k\leq n$, and consider the embedding $U_k^+\subset U_n^+$ given by $g\to diag(g,1_{n-k})$. That is, at the level of algebras, we use the surjective morphism $C(U_n^+)\to C(U_k^+)$ obtained by mapping the fundamental corepresentation of $U_n^+$ to the matrix $diag(v,1_{n-k})$, where $v$ is the fundamental corepresentation of $U_k^+$.

\begin{definition}
Associated to any quantum subgroup $G\subset U_n^+$ and any $k\leq n$ are:
\begin{enumerate}
\item The compact quantum group $H=G\cap U_k^+$.

\item The algebra $C(G/H)\subset C(G)$ constructed in section 1.

\item The algebra $C_\times(G/H)\subset C(G/H)$ generated by $\{u_{ij}|i>k,j>0\}$.
\end{enumerate}
\end{definition}

This definition uses a number of simple facts, to be explained now. First, we use in (1) above the intersection operation for quantum subgroups, defined as follows: if $H,H'\subset G$ are quantum subgroups, we let $K=H\cap H'$ be the quantum subgroup of $G$ defined by the fact that $\ker(C(G)\to C(K))$ is the sum of the ideals $\ker(C(G)\to C(H))$ and $\ker(C(G)\to C(H'))$. Observe that we have the following commuting diagram:
$$\begin{matrix}
C(G)&\to&C(H)\\
\\
\downarrow&&\downarrow\\
\\
C(H')&\to&C(K)
\end{matrix}$$

In diagrammatic terms, the universal property of $K=H\cap H'$ is that for any square diagram like the one above, but with an arbitrary $C^*$-algebra $C(K')$ in place of $C(K)$ there exists a unique map $\rho:C(K)\to C(K')$ making the obvious diagram commutative. In other words $C(K)$ is a push-out in the sense of \cite{ped}. It is easy to see that $K$ is naturally endowed with a compact quantum group
structure, making is a subgroup of $H,H'$.

Regarding now (3), let $u,v$ be the fundamental corepresentations of $G,H$, so that the arrow $\pi:C(G)\to C(H)$ is given by $u\to diag(v,1_{n-k})$. Then $(\pi\otimes id)\Delta$ is given by:
$$(\pi\otimes id)\Delta(u_{ij})
=\sum_s\pi(u_{is})\otimes u_{sj}
=\begin{cases}
\sum_{s\leq k}v_{is}\otimes u_{sj}&i\leq k\\
1\otimes u_{ij}&i>k
\end{cases}$$

In particular we see that the equality $(\pi\otimes id)\Delta f=1\otimes f$ defining $C(G/H)$ holds on all the coefficients $f=u_{ij}$ with $i>k$, and this justifies the inclusion appearing in (3).

As a first remark, it can be shown that $C_\times(G/H)$ is an embeddable quantum homogeneous space algebra in the sense of Podle\'s \cite{pod}.

Let us first try to understand what happens in the group dual case. We will do our study here in two steps: first in the ``diagonal'' case, and then in the general case.

We recall that given a discrete group $\Gamma=<g_1,\ldots,g_n>$, the matrix $D=diag(g_i)$ is biunitary, and produces a surjective morphism $C(U_n^+)\to C^*(\Gamma)$. This morphism can be viewed as corresponding to a quantum embedding $\widehat{\Gamma}\subset U_n^+$, that we call ``diagonal''.

The normal closure of a subgroup $\Lambda\subset\Gamma$ is the biggest subgroup $\Lambda'\subset\Gamma$ containing $\Lambda$ as a normal subgroup. Note that $\Lambda'$ can be different from the normalizer $N(\Lambda)$.

\begin{proposition}
Assume that $G=\widehat{\Gamma}$, with $\Gamma=<g_1,\ldots,g_n>$, diagonally embedded, and let $H=G\cap U_k^+$.
\begin{enumerate}
\item $H=\widehat{\Theta}$, where $\Theta=\Gamma/<g_{k+1}=1,\ldots,g_n=1>$.

\item $C_\times(G/H)=C^*(\Lambda)$, where $\Lambda=<g_{k+1},\ldots,g_n>$.

\item $C(G/H)=C^*(\Lambda')$, where ``prime'' is the normal closure.

\item $C_\times(G/H)=C(G/H)$ if and only if $\Lambda\triangleleft\Gamma$.
\end{enumerate}
\end{proposition}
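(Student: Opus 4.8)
The plan is to treat the four assertions in order, deriving (4) as a consequence of (2) and (3); the first three amount to careful bookkeeping with group-algebra quotients, while (4) reduces to one clean $C^*$-algebraic separation statement.

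For (1) I would compute $C(H)$ straight from the definition of the intersection $G\cap U_k^+$ taken inside $U_n^+$: the kernel of $C(U_n^+)\to C(H)$ is the sum of the kernels of $C(U_n^+)\to C(G)$ and $C(U_n^+)\to C(U_k^+)$. The second quotient imposes $u_{ij}=\delta_{ij}$ whenever $\max(i,j)>k$; composing with the diagonal quotient $u_{ij}\mapsto\delta_{ij}g_i$, the off-diagonal relations are automatic and the diagonal ones become exactly $g_i=1$ for $i>k$. Hence $C(H)=C^*(\Gamma)/\langle g_{k+1}=\cdots=g_n=1\rangle=C^*(\Theta)$, i.e. $H=\widehat\Theta$.

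For (2), under the diagonal identification each generator $u_{ij}$ with $i>k$ equals $\delta_{ij}g_i$, so $C_\times(G/H)$ is the $C^*$-subalgebra of $C^*(\Gamma)$ generated by $g_{k+1},\dots,g_n$. Using the standard fact that the inclusion of a subgroup induces an isometric embedding $C^*(\Lambda)\hookrightarrow C^*(\Gamma)$ (via inducing representations), this subalgebra is a copy of $C^*(\Lambda)$. For (3), I observe that the inclusion $H=\widehat\Theta\subset\widehat\Gamma=G$ corresponds to the quotient map $\Gamma\to\Theta=\Gamma/\Lambda'$, whose kernel is precisely the normal closure $\Lambda'$ of $\Lambda$. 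Feeding this into the earlier group-dual quotient computation (the proposition giving $\widehat\Gamma/\widehat Q=\widehat{\ker(\Gamma\to Q)}$) yields $C(G/H)=C^*(\ker(\Gamma\to\Theta))=C^*(\Lambda')$.

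Finally, for (4), combining (2) and (3) identifies the inclusion $C_\times(G/H)\subset C(G/H)$ with the inclusion $C^*(\Lambda)\subset C^*(\Lambda')$ of $C^*$-subalgebras of $C^*(\Gamma)$, where $\Lambda\subseteq\Lambda'$. The heart of the matter is the lemma that, for subgroups $\Lambda\subseteq\Lambda'\subseteq\Gamma$, one has $C^*(\Lambda)=C^*(\Lambda')$ if and only if $\Lambda=\Lambda'$. I would prove this with the canonical tracial state $\tau$ on $C^*(\Gamma)$ determined by $\tau(g)=\delta_{g,e}$: the $C^*$-algebra generated by a subgroup $K$ is the closure of $\mathrm{span}(K)$, and for $g\notin K$ and any $a\in\mathrm{span}(K)$ a short computation gives $\tau\big((g-a)^*(g-a)\big)=1+\tau(a^*a)\geq 1$, whence $\|g-a\|\geq 1$ and $g\notin C^*(K)$. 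Since the normal closure satisfies $\Lambda'=\Lambda$ exactly when $\Lambda$ is normal in $\Gamma$, this gives the stated equivalence $C_\times(G/H)=C(G/H)\iff\Lambda\triangleleft\Gamma$.

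The main obstacle is the $C^*$-algebraic input in (2) and (4): full group $C^*$-algebras need not be well behaved, so I must make sure that the embedding $C^*(\Lambda)\hookrightarrow C^*(\Gamma)$ really is injective and that the trace argument genuinely detects $g\notin C^*(K)$ — the key point being that $\tau$ need only be a positive unital functional, not a faithful one, for the norm estimate to go through. By contrast the group theory (identifying the intersection and recognizing $\Lambda'=\ker(\Gamma\to\Theta)$ as the normal closure) is routine once the quotient maps are written out explicitly.
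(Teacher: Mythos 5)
Your proposal is correct and takes essentially the same route as the paper: (1) and (2) by writing out the quotient maps explicitly (with the intersection computed via the sum-of-kernels/push-out definition), (3) by feeding $H=\widehat{\Theta}$ into Proposition 1.3 together with the standard fact that $\ker(\Gamma\to\Theta)$ is the normal closure of $\Lambda$, and (4) as a consequence of (2) and (3). The one point where you go beyond the paper is that you make explicit the separation fact behind (4) --- that $\Lambda\subsetneq\Lambda'$ forces $C^*(\Lambda)\subsetneq C^*(\Lambda')$ inside $C^*(\Gamma)$, proved via the canonical trace $\tau(g)=\delta_{g,e}$ and the estimate $\tau\bigl((g-a)^*(g-a)\bigr)\geq 1$ --- a detail the paper leaves implicit in ``follows from (2) and (3)'', and which you handle correctly, including the observations that the embedding $C^*(\Lambda)\hookrightarrow C^*(\Gamma)$ of full group $C^*$-algebras is isometric by induction of representations and that only positivity, not faithfulness, of $\tau$ is needed.
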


\begin{proof}
We use the standard fact that for any group $\Gamma=<a_i,b_j>$, the kernel of the quotient map $\Gamma\to\Gamma/<a_i=1>$ is the normal closure of the subgroup $<a_i>\subset\Gamma$.

(1) Since the map $C(U_n^+)\to C(U_k^+)$ is given on diagonal coordinates by $u_{ii}\to v_{ii}$ for $i\leq k$ and $u_{ii}\to 1$  for $i>k$, the result follows from definitions.

(2) Once again, this assertion follows from definitions.

(3) From Proposition 1.3 and from (1) we get $G/H=\widehat{\Lambda'}$, where $\Lambda'=\ker(\Gamma\to\Theta)$. By the above observation, this kernel is exactly the normal closure of $\Lambda$.

(4) This follows from (2) and (3).
\end{proof}

Let us try now to understand the general group dual case. We recall that the subgroups $\widehat{\Gamma}\subset U_n^+$ appear as follows: $\Gamma=<g_1,\ldots,g_n>$ is a discrete group, $J\in U_n$ is a unitary, and the  morphism $C(U_n^+)\to C^*(\Gamma)$ is given by $u\to JDJ^*$, where $D=diag(g_i)$.

This follows indeed from two results of Woronowicz, namely: (1) the finite-dimensional unitary representations of any compact quantum group, and in particular of $\widehat{\Gamma}$, are completely reducible, and (2) the irreducible representations of $\widehat{\Gamma}$ are all 1-dimensional, and correspond to the group elements $g\in\Gamma$. Indeed, we obtain that the $n$-dimensional unitary representations of $\widehat{\Gamma}$ are precisely those of the form $JDJ^*$, with $D=diag(g_i)$ a direct sum of irreducibles, and with $J\in U_n$ unitary, as stated above. See Theorem 1.7 in \cite{wo1}.

With this result in hand, Proposition 2.4 generalizes as follows:

\begin{theorem}
Assume that $G=\widehat{\Gamma}$, with $\Gamma=<g_1,\ldots,g_n>$, embedded via $u\to JDJ^*$, and let $H=G\cap U_k^+$.
\begin{enumerate}
\item $H=\widehat{\Theta}$, where $\Theta=\Gamma/<g_r=1|\exists\,i>k,J_{ir}\neq 0>$, embedded $u_{ij}\to (JDJ^*)_{ij}$.

\item $C_\times(G/H)=C^*(\Lambda)$, where $\Lambda=<g_r|\exists\,i>k,J_{ir}\neq 0>$.

\item $C(G/H)=C^*(\Lambda')$, where ``prime'' is the normal closure.

\item $C_\times(G/H)=C(G/H)$ if and only if $\Lambda\triangleleft\Gamma$.
\end{enumerate}
\end{theorem}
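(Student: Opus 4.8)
The plan is to follow the strategy of Proposition 2.4, reducing everything to the computation of the two algebras appearing in (1) and (2): assertion (3) will follow from (1) together with Proposition 1.3, and (4) will follow formally from (2) and (3). Throughout I write $w_{ij}=\rho(u_{ij})=\sum_r J_{ir}\overline{J_{jr}}\,g_r$ for the image in $C^*(\Gamma)$ of the coordinate $u_{ij}$ under the embedding $u\to JDJ^*$, and I set $R=\{r\mid\exists\,i>k,\ J_{ir}\neq0\}$, so that $\Lambda=\langle g_r\mid r\in R\rangle$ by definition.

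For (1), I would present $C(H)$ as the quotient of $C(G)=C^*(\Gamma)$ by the relations obtained by pushing forward $\ker\big(C(U_n^+)\to C(U_k^+)\big)$ along $\rho$, namely $w_{ij}=0$ for $i\neq j$ with $i>k$ or $j>k$, together with $w_{ii}=1$ for $i>k$. The key observation is that the diagonal relation $\sum_r|J_{ir}|^2 g_r=1$ for $i>k$, combined with $\sum_r|J_{ir}|^2=1$ (unitarity of $J$) and the linear independence of distinct group elements in a group $C^*$-algebra, forces every $g_r$ with $J_{ir}\neq0$ to become trivial: a nonnegative combination of group elements whose coefficients sum to $1$ can equal the unit only if all the mass sits on the identity. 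Hence in $C(H)=C^*(\Theta)$ we must have $g_r=1$ for all $r\in R$, so $\Theta$ is a quotient of $\Gamma/\Lambda'$. Conversely, a direct check using the rows of $JJ^*=1$ (the vanishing $J_{jr}=0$ for $r\notin R$ and $j>k$ is exactly what makes it work) shows that all the above relations already hold in $C^*(\Gamma/\Lambda')$, giving the reverse factorization and hence $\Theta=\Gamma/\Lambda'=\Gamma/\langle g_r=1\mid r\in R\rangle$.

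For (2), the inclusion $C_\times(G/H)\subseteq C^*(\Lambda)$ is immediate, since each generator $w_{ij}$ with $i>k$ is supported on $\{g_r\mid r\in R\}$. The substantial point is the reverse inclusion, i.e. recovering each individual $g_{r_0}$ with $r_0\in R$ from the row coordinates, and here I would exploit the invertibility of $J$. For fixed $i>k$ choose scalars $(\lambda_j)_j$ solving $J^t\bar\lambda=e_{r_0}$, so that $\sum_j\lambda_j\overline{J_{jr}}=\delta_{rr_0}$, and then
$$\sum_j\lambda_j\,w_{ij}=\sum_r J_{ir}\Big(\sum_j\lambda_j\overline{J_{jr}}\Big)g_r=J_{ir_0}\,g_{r_0}.$$
Since $r_0\in R$ there is some $i>k$ with $J_{ir_0}\neq0$, and dividing by this scalar shows $g_{r_0}\in C_\times(G/H)$. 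As $r_0\in R$ was arbitrary this yields $C^*(\Lambda)\subseteq C_\times(G/H)$, hence equality; note that this formal identity needs no hypothesis on the distinctness of the $g_r$.

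Finally, (3) follows by applying Proposition 1.3(2) to the surjection $\Gamma\to\Theta=\Gamma/\Lambda'$, giving $G/H=\widehat{\ker(\Gamma\to\Theta)}=\widehat{\Lambda'}$ and so $C(G/H)=C^*(\Lambda')$; and (4) follows since $C^*(\Lambda)=C^*(\Lambda')$ inside $C^*(\Gamma)$ if and only if $\Lambda=\Lambda'$ (again by linear independence of group elements, as $\Lambda\subseteq\Lambda'$), which holds exactly when $\Lambda$ is already normal, i.e. $\Lambda\triangleleft\Gamma$. I expect the main obstacle to be making the two ``linear independence plus positivity'' steps fully rigorous — extracting the normal closure $\Lambda'$ in (1) from the $C^*$-level relations, and justifying the column-extraction identity in (2) — whereas the reductions (3) and (4) are purely formal. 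The extraction trick via $J^t\bar\lambda=e_{r_0}$ is the real engine of the argument, and is precisely what replaces the trivial ``diagonal'' computation of Proposition 2.4.
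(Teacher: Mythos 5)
Your proposal follows essentially the same route as the paper: the extraction identity $\sum_j J_{jr_0}w_{ij}=J_{ir_0}g_{r_0}$ in (2) is literally the paper's computation $\sum_j A_{ij}J_{jm}=J_{im}g_m$, assertions (3) and (4) are reduced to Proposition 1.3 and to (2)$+$(3) exactly as in the paper, and in (1) your two-sided ideal factorization in $C^*(\Gamma)$ (unitarity of $J$ forcing $g_r=1$ for $r\in R$, converse check via the rows of $JJ^*=1$) is the same content as the paper's push-out argument, just packaged differently.

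One justification in (1) needs repair, and you yourself flagged it as the delicate point. The relation $\sum_r|J_{ir}|^2\,g_r=1$ (for $i>k$) holds in the quotient $C(H)=C^*(\Gamma)/I$, not in $C^*(\Gamma)$ itself, and in a proper quotient of a full group $C^*$-algebra the images of distinct group elements need not be linearly independent (e.g. $C^*(\mathbb Z)=C(\mathbb T)\to C(X)$ for $X\subset\mathbb T$ finite), so ``linear independence of distinct group elements in a group $C^*$-algebra'' cannot be invoked there as stated. Two standard fixes: (i) your positivity statement is in fact true for unitaries in any unital $C^*$-algebra, since $1$ is an extreme point of the closed unit ball --- or directly, any state $\phi$ gives $\sum_r|J_{ir}|^2\phi(\dot g_r)=1$ with $|\phi(\dot g_r)|\leq 1$, forcing $\phi(\dot g_r)=1$ for every state and hence $\dot g_r=1$; (ii) the paper's route: first observe that $C(H)$, being a quotient Hopf $C^*$-algebra of the cocommutative $C^*(\Gamma)$, equals $C^*(\Theta)$ for a quotient group $\Theta$ of $\Gamma$, so the relation lives among genuine group elements $f_r=\varphi(g_r)\in\Theta$, where (after grouping equal ones) linear independence is legitimate. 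With either fix your argument closes. A smaller instance of the same issue occurs in (4): since $C_\times(G/H)$ and $C(G/H)$ are \emph{closed} spans, the separation $g\in\Lambda'\setminus\Lambda\Rightarrow g\notin\overline{\mathrm{span}}\,\Lambda$ should be justified by a continuous functional, e.g. the canonical trace $\tau(g^{-1}\,\cdot\,)$ on $C^*(\Gamma)$, rather than by algebraic linear independence alone.
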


\begin{proof}
We will basically follow the proof of Proposition 2.4 above.

(1) Let $\Lambda=<g_1,\ldots,g_n>$, let $J\in U_n$, and consider the embedding $\widehat{\Lambda}\subset U_n^+$ corresponding to the morphism $C(U_n^+)\to C^*(\Lambda)$ given by $u\to JDJ^*$, where $D=diag(g_i)$.

Let $G=\widehat{\Lambda}\cap U_k^+$. Since we have $G\subset\widehat{\Lambda}$, the algebra $C(G)$ is cocomuttaive, so we have $G=\widehat{\Theta}$ for a certain discrete group $\Theta$. Moreover, the inclusion $\widehat{\Theta}\subset\widehat{\Lambda}$ must come from a group morphism $\varphi:\Lambda\to\Theta$. Also, since $\widehat{\Theta}\subset U_k^+$, we have a morphism $C(U_k^+)\to C^*(\Theta)$ given by $v\to V$, where $V$ is a certain $k\times k$ biunitary over $C^*(\Theta)$.

With these observations in hand, let us look now at the intersection operation. We must have a group morphism $\varphi:\Lambda\to\Theta$ such that the following diagram commutes:
$$\begin{matrix}
C(U_n^+)&\to&C(U_k^+)\\
\\
\downarrow&&\downarrow\\
\\
C^*(\Lambda)&\to&C^*(\Theta)
\end{matrix}$$

Thus we must have $(id\otimes\varphi)(JDJ^*)=diag(V,1_{n-k})$, and with $f_i=\varphi(g_i)$, we get:
$$\sum_rJ_{ir}\bar{J}_{jr}f_r
=\begin{cases}
V_{ij}&\mbox{if }i,j\leq k\\
\delta_{ij}&\mbox{otherwise}
\end{cases}$$

Now since $J$ is unitary, the second part of the above condition is equivalent to ``$f_r=1$ whenever there exists $i>k$ such that $J_{ir}\neq 0$''. Indeed, this condition is easily seen to be equivalent to the ``$=1$'' conditions, and implies the ``$=0$'' conditions. We claim that:
$$\Theta=\Lambda/<g_r=1|\exists\,i>k,J_{ir}\neq 0>$$

Indeed, the above discussion shows that $\Theta$ must be a quotient of the group on the right, say $\Theta_0$. On the other hand, since in $C^*(\Theta_0)$ we have $J_{ir}g_r=J_{ir}1$ for any $i>k$, we obtain that $(JDJ^*)_{ij}=\delta_{ij}$ unless $i,j\leq k$, so we have $JDJ^*=diag(V,1_{n-k})$, for a certain matrix $V$. But $V$ must be a biunitary, so we have a morphism $C(U_k^+)\to C^*(\Theta_0)$ mapping $v\to V$, which completes the push-out diagram, and proves our claim.

(2) Let $A_{ij}=\sum_rJ_{ir}\bar{J}_{jr}g_r$ with $i>k,j>0$ be the standard generators of $C_\times(G/H)$. Since $\sum_jA_{ij}J_{jm}=J_{im}g_m$ we conclude that $C_\times(G/H)$ contains any $g_r$ such that there exists $i>k$ with $J_{ir}\neq 0$, i.e. contains any $g_r\in\Lambda$. Conversely, if $g_r\in\Gamma-\Lambda$ then $J_{ir}g_r=0$ for any $i>k$, so $g_r$ doesn't appear in the formula of any of the generators $A_{ij}$.

(3,4) The proof here is similar to the proof of Proposition 2.4 (3,4).
\end{proof}

We should mention that Theorem 2.5 has some further extensions, when plugged into the general machinery developed by Podle\'s in \cite{pod}. One can prove for instance that, given a discrete group $\Gamma$, there is a natural bijection between the set of embeddable quantum homogeneous space algebras $A\subset C^*(\Gamma)$ and the lattice of subgroups of $\Gamma$. Under this bijection the quotient quantum homogeneous spaces correspond to normal subgroups.

\section{The easy case}

In the reminder of this paper we study the algebras $C_\times(G/H)\subset C(G/H)$ constructed in Definition 2.3, in the case where $G$ is an ``easy quantum group'', in the sense of \cite{bsp}. The motivations here come from several questions, partly mentioned in the introduction, and which will be further explained in this section, and in the following two ones.

Let us first recall the definition of the easy groups \cite{bsp}. We recall that the space of fixed points of a representation $u$ is given by $Fix(u)=\{\xi|u(\xi\otimes 1)=\xi\otimes 1\}$.

Let $\{e_1,\ldots,e_n\}$ be the standard basis of $\mathbb C^n$. We have the following construction:

\begin{definition}
The partitions $\pi\in P(s)$ produce vectors of $(\mathbb C^n)^{\otimes s}$ via the formula
$$\xi_\pi=\sum_{i_1\ldots i_s}\delta_\pi(i)e_{i_1}\otimes\ldots\otimes e_{i_s}$$
where $\delta_\pi\in\{0,1\}$ is defined as follows: we put the indices $i_1,\ldots,i_s$ on the points of $\pi$, and we set $\delta_\pi=1$ if any block of $\pi$ contains only equal indices of $i$, and $\delta_\pi=0$ if not.
\end{definition}

Consider now the embedding $S_n\subset O_n$ given by the usual permutation matrices, and let $S_n\subset G_n\subset O_n$ be an intermediate compact group. If $u,v,w$ denote respectively the fundamental representations of $S_n,G_n,O_n$, by functoriality we have embeddings of fixed point spaces $Fix(w^{\otimes s})\subset Fix(v^{\otimes s})\subset Fix(u^{\otimes s})$, for any $s\in\mathbb N$. Now by some well-known results, basically going back to Brauer's work in \cite{bra}:
\begin{enumerate}
\item The space $Fix(u^{\otimes s})$ is spanned by $P(s)$, the set of partitions of $s$ points.

\item The space $Fix(w^{\otimes s})$ is spanned by $P_2(s)$, the set of pairings of $s$ points.
\end{enumerate}

As a first consequence, we can conclude that the abstract spaces $Fix(v^{\otimes s})$ are not that abstract: they consist in fact of linear combinations of partitions in $P(s)$.

The ``easiest'' case, combinatorially speaking, is when each $Fix(v^{\otimes s})$ is spanned by certain partitions in $P(s)$. This leads to the following definition, first stated in \cite{bsp}:

\begin{definition}
A compact group $S_n\subset G_n\subset O_n$ is called ``easy'' if there exist sets of diagrams $P_2(s)\subset D(s)\subset P(s)$ such that $Fix(v^{\otimes s})=span(D(s))$, for any $s\in\mathbb N$.
\end{definition}

As a first example, the groups $O_n,S_n$ are easy, with $D=P_2,P$ respectively. Some other examples are the hyperoctahedral and bistochastic groups $H_n,B_n$, where $D=P_{even},P_{12}$ (partitions with all blocks having even size, respectively singletons and pairings).

It was proved in \cite{bsp} that, besides these 4 main examples, there are only 2 more easy groups, namely $S_n'=\mathbb Z_2\times S_n$ and $B_n'=\mathbb Z_2\times B_n$. Here the sets of partitions are $D=P_{even}',P_{12}'$, where the ``prime'' operation consists in setting $D(s)=\emptyset$ for $s$ odd.

More generally, in the quantum case now, we have the following definition.

\begin{definition}
A quantum group $S_n\subset G_n\subset O_n^+$ is called ``easy'' if there exist sets of diagrams $NC_2(s)\subset D(s)\subset P(s)$ such that $Fix(v^{\otimes s})=span(D(s))$, for any $s\in\mathbb N$.
\end{definition}

Here $O_n^+$ is the free orthogonal quantum group, corresponding to the universal Hopf algebra $A_o(n)$ constructed by Wang in \cite{wa1}. As for $NC_2(s)$, this is the set of noncrossing pairings of $s$ points, known to span the fixed point spaces for $O_n^+$. See \cite{bsp}.

The basic examples are the ``free analogues'' of the above-mentioned 6 easy groups. These are quantum groups $O_n^+,S_n^+,H_n^+,B_n^+,S_n'^+,B_n'^+$, constructed in Wang's papers \cite{wa1}, \cite{wa2} and then in \cite{ahn}, \cite{bsp}, and whose sets of partitions are obtained from the corresponding classical sets, by intersecting with $NC(s)$. It was proved in \cite{bsp} that in the ``free case'', i.e. when we have $S_n^+\subset G_n$, these 6 examples are the only ones.

It is important to note that, in these classification results in the classical and free cases, all the examples come in series, i.e. there is no ``exceptional'' example. See \cite{bsp}.

Some other examples, which are neither classical, nor free, were constructed in \cite{ez1}.

The easy groups have led to some applications to probability and free probability, and one key property here, introduced and heavily used in \cite{ez2}, is as follows.

\begin{definition}
An easy quantum group, with category of partitions $D=(D(s))$, is called ``multiplicative'' if $D$ is stable by the operation consisting in removing blocks.
\end{definition}

As a first example, the groups $O_n,S_n,H_n,B_n$ are clearly multiplicative, and so are their free versions $O_n^+,S_n^+,H_n^+,B_n^+$. On the negative side, the groups $S_n',B_n'$ are clearly not multiplicative, nor are their free versions $S_n'^+,B_n'^+$. As for the ``intermediate'' quantum groups constructed in \cite{ez1}, these are in general not multiplicative either.

As already mentioned, the multiplicativity assumption is needed in order to have some control over the combinatorics of $D$, as to obtain probabilistic results about $G$. Without getting here into details, let us just mention the following result: ``in the context of a liberation operation $G_n\to G_n^+$, the asymptotic spectral distributions of the main characters are in Bercovici-Pata bijection \cite{bpa} precisely in the multiplicative case''. See \cite{bsp}.

For the purposes of the present paper, the multiplicativity condition will play as well a central role. We have the following result, clarifying the algebraic meaning of this condition, and making the link with the abstract considerations in section 2 above.

\begin{theorem}
For an easy quantum group $G_n\subset U_n^+$, the following are equivalent:
\begin{enumerate}
\item $G_n$ is multiplicative in the above sense.

\item We have $G_n\cap U_k^+=G_k$, for any $k\leq n$.
\end{enumerate}
\end{theorem}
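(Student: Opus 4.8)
The plan is to work with the concrete quotient presentations of the algebras and to isolate a single combinatorial lemma describing what the partition relations of $G_n$ become after restriction to $U_k^+$. Recall that $C(H)$, with $H=G_n\cap U_k^+$, is the pushout $C(U_n^+)/(I_{G_n}+I_{U_k^+})$, where $I_{G_n}$ and $I_{U_k^+}$ are the two defining kernels. Since passing to $C(U_k^+)$ amounts to the substitution $u\to diag(v,1_{n-k})$, with $v$ a free $k\times k$ biunitary, the algebra $C(H)$ is the quotient of $C(U_k^+)$ by the images of the defining relations of $G_n$, i.e. by the relations $\xi_\pi\in Fix(u^{\otimes s})$ for $\pi\in D(s)$, read after this substitution. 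On the other hand $C(G_k)$ is the quotient of $C(U_k^+)$ by the relations $\xi_\pi^{(k)}\in Fix(v^{\otimes s})$, $\pi\in D(s)$. Thus $H=G_k$ is equivalent to the equality of these two families of relations inside $C(U_k^+)$ (we may assume $n>k$, the case $n=k$ being trivial), and the whole statement reduces to understanding the first family.

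The key step, and the main obstacle, is the following factorization. First I would write the relation $\xi_\pi\in Fix(u^{\otimes s})$ in components and substitute $u_{ab}=v_{ab}$ for $a,b\le k$, $u_{ab}=\delta_{ab}$ when $\max(a,b)>k$, and $u_{ab}=0$ otherwise. Fixing a target multi-index $j$, the block structure of $\pi$ forces each block to lie either entirely in the ``$v$-sector'' $\{1,\dots,k\}$ or entirely in the ``identity-sector'' $\{k+1,\dots,n\}$: a block meeting both sectors makes both sides of the component equation vanish. For a block in the identity-sector the relevant factors are trivial ($u_{cc}=1$ for $c>k$) and only contribute a constancy constraint, whereas the blocks in the $v$-sector reproduce exactly the component of the dimension-$k$ relation for the sub-partition $\pi'$ obtained by deleting the identity-sector blocks. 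Running over all $j$, I would conclude that the substituted relation for $\pi$ is equivalent, as a set of relations in $C(U_k^+)$, to the whole family of dimension-$k$ relations for the partitions $\pi'$ obtained from $\pi$ by removing blocks — every such $\pi'$ being realized, since $n>k$ leaves room to place the removed blocks in the identity-sector.

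Granting this, the equivalence follows quickly. If $D$ is stable under removing blocks, then the family of all block-removals of partitions of $D$ is $D$ itself, so the two families of relations coincide and $C(H)=C(G_k)$, i.e. $G_n\cap U_k^+=G_k$. Conversely, if $D$ is not stable, I would choose $\pi\in D(s)$ and a block-removal $\pi'\notin D(s)$, and take any $n>k\ge s$. By the lemma $\xi_{\pi'}^{(k)}\in Fix_H(v^{\otimes s})$, whereas $\xi_{\pi'}^{(k)}\notin Fix_{G_k}(v^{\otimes s})=span\{\xi_\tau^{(k)}:\tau\in D(s)\}$, because the vectors $\{\xi_\tau^{(k)}:\tau\in P(s)\}$ are linearly independent for $k\ge s$ and $\pi'\notin D(s)$. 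Hence $Fix_H\ne Fix_{G_k}$ and $G_n\cap U_k^+\ne G_k$, proving the contrapositive.

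I expect the delicate points to be mostly bookkeeping: verifying that the intersection pushout really yields the quotient of $C(U_k^+)$ by the substituted relations (using that $U_k^+$ is free, so it imposes nothing on the $k\times k$ block beyond biunitarity, and that $NC_2\subset D$ forces $v$ orthogonal so that the calculus with $P(s)$ applies), and being careful in the lemma that ``mixed'' blocks annihilate both sides of each component equation. The heart of the argument, and the conceptual reason why ``removing blocks'' is the relevant operation, is the factorization lemma of the second paragraph.
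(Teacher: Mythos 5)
Your proposal is correct and takes essentially the same route as the paper: its proof rests on exactly your factorization lemma (the paper's ``first claim'': for $\tilde u=diag(u,1_{n-k})$ one has $\xi_\pi\in Fix(\tilde u^{\otimes s})$ iff $\xi_{\pi'}\in Fix(u^{\otimes s'})$ for every $\pi'$ obtained from $\pi$ by removing blocks, proved by the same component-wise case analysis in which mixed blocks kill both sides), combined with the same Tannakian/push-out presentation of $C(G_n\cap U_k^+)$ as a quotient of $C(U_k^+)$ by the substituted relations. The only differences are cosmetic: the paper packages the conclusion as $G_n\cap U_k^+=G_k'$, where $D'$ is the category generated by all subpartitions of $D$, while you spell out the converse explicitly via linear independence of the vectors $\xi_\tau$, $\tau\in P(s)$, for $k\geq s$ --- a detail the paper leaves implicit --- modulo the harmless typo that your removed-block partition should satisfy $\pi'\notin D(s')$ rather than $\pi'\notin D(s)$.
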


\begin{proof}
We will prove that $G_n\cap U_k^+=G_k'$, where $G'=(G_n')$ is the easy quantum group associated to the category $D'$ generated by all subpartitions of the partitions in $D$.

As explained in \cite{bsp}, the correspondence between categories of partitions and easy quantum groups comes from Woronowicz's Tannakian duality in \cite{wo2}. More precisely, the quantum group $G_n\subset O_n^+$ associated to a category of partitions $D=(D(s))$ is obtained by imposing to the fundamental representation of $O_n^+$ the fact that its $s$-th tensor power must fix $\xi_\pi$, for any $s\in\mathbb N$ and $\pi\in D(s)$. So, we have the following presentation result:
$$C(G_n)=C(O_n^+)/<\xi_\pi\in Fix(u^{\otimes s}),\forall s,\,\forall\pi\in D(s)>$$

Now since $\xi_\pi\in Fix(u^{\otimes s})$ means $u^{\otimes s}(\xi_\pi\otimes 1)=\xi_\pi\otimes 1$, this condition is equivalent to the following collection of equalities, one for each multi-index $i\in\{1,\ldots,n\}^s$:
$$\sum_{j_1\ldots j_s}\delta_\pi(j)u_{i_1j_1}\ldots u_{i_sj_s}=\delta_\pi(i)1$$

Summarizing, we have the following presentation result:
$$C(G_n)=C(O_n^+)/<\sum_{j_1\ldots j_s}\delta_\pi(j)u_{i_1j_1}\ldots u_{i_sj_s}=\delta_\pi(i)1,\forall s,\,\forall\pi\in D(s),\,\forall i>$$

Our first claim is as follows: let $k\leq n$, assume that we have a compact quantum group $K\subset O_k^+$, with fundamental representation denoted $u$, and consider the $n\times n$ matrix $\tilde{u}=diag(u,1_{n-k})$. Then for any $s\in\mathbb N$ and any $\pi\in P(s)$, we have:
$$\xi_\pi\in Fix(\tilde{u}^{\otimes s})\iff\xi_{\pi'}\in Fix(u^{\otimes s'}),\,\forall\pi'\subset\pi$$

Here $\pi'\subset\pi$ means that $\pi'\in P(s')$ is obtained from $\pi\in P(s)$ by removing some of its blocks. The proof of this claim is standard. Indeed, when making the replacement $u\to\tilde{u}$ and trying to check the condition $\xi_\pi\in Fix(\tilde{u}^{\otimes s})$, we have two cases:

-- $\delta_\pi(i)=1$. Here the $>k$ entries of $i$ must be joined by certain blocks of $\pi$, and we can consider the partition $\pi'\in D(s')$ obtained by removing these blocks. The point now is that the collection of $\delta_\pi(i)=1$ equalities to be checked coincides with the collection of $\delta_\pi(i)=1$ equalities expressing the fact that we have $\xi_\pi\in Fix(u^{\otimes s'})$, for any $\pi'\subset\pi$.

-- $\delta_\pi(i)=0$. In this case the situation is quite similar: the collection of $\delta_\pi(i)=0$ equalities to be checked coincides, modulo some $0=0$ identities, with the collection of $\delta_\pi(i)=0$ equalities expressing the fact that we have $\xi_\pi\in Fix(u^{\otimes s'})$, for any $\pi'\subset\pi$.

Our second claim is as follows: given a quantum group $K\subset O_n^+$, with fundamental representation denoted $v$, the algebra of functions on $H=K\cap O_k^+$ is given by:
$$C(H)=C(O_k^+)/<\xi\in Fix(\tilde{u}^{\otimes s}),\,\forall\xi\in Fix(v^{\otimes s})>$$

This follows indeed from Woronowicz's results in \cite{wo2}, because the algebra on the right comes from the Tannakian formulation of the intersection operation in Definition 2.3.

Now with the above two claims in hand,  we can conclude that we have $G_n\cap U_k^+=G_k'$, where $G'=(G_n')$ is the easy quantum group associated to the category $D'$ generated by all the subpartitions of the partitions in $D$. In particular we see that the condition $G_n\cap U_k^+=G_k^+$ for any $k\leq n$ is equivalent to $D=D'$, and this gives the result.
\end{proof}

\section{Properness results}

In this section we study the inclusions $C_\times(G_n/G_k)\subset C(G_n/G_k)$, where $G=(G_n)$ is a multiplicative easy quantum group. We recall that the basic examples are the classical groups $S,O,H,B$, and their free analogues $S^+,O^+,H^+,B^+$. In addition, it is known that in the free case the list of such quantum groups is precisely $S^+,O^+,H^+,B^+$. See \cite{bsp}.

Since in the classical case a complete answer is provided by Proposition 2.1 above, and in the ``intermediate'' case (i.e. not classical, nor free) we have no multiplicative examples, we will actually restrict attention to the free case. That is, we will assume that $G=(G_n)$ is one of the quantum groups $S^+,O^+,H^+,B^+$.

We will prove that the inclusions $C_\times(G_n/G_k)\subset C(G_n/G_k)$ are in general proper.

Let us first recall the defining relations between the coordinates of $G$.

\begin{proposition}
The defining relations for $C(G)$ in terms of the standard generators $u_{ij}$ are as follows:
\begin{enumerate}
\item $G=O_n^+$: $u$ is orthogonal, i.e. $u_{ij}$ are self-adjoint, and $u^t=u^{-1}$.

\item $G=S_n^+$: $u$ is magic, i.e. orthogonal, with $u_{ij}$ being projections.

\item $G=H_n^+$: $u$ is cubic, i.e. orthogonal, with $xy=0$ on rows and columns.

\item $G=B_n^+$: $u$ is bistochastic, i.e. orthogonal, with sum $1$ on rows and columns.
\end{enumerate}
\end{proposition}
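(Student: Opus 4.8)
The plan is to read off all four sets of relations from the single presentation established in the proof of Theorem~3.6, namely
$$C(G_n)=C(O_n^+)\Big/\Big\langle\,\sum_{j_1\ldots j_s}\delta_\pi(j)\,u_{i_1j_1}\cdots u_{i_sj_s}=\delta_\pi(i)1\,:\,\forall s,\ \forall\pi\in D(s),\ \forall i\,\Big\rangle .$$
Since the base algebra $C(O_n^+)$ is by definition the universal $C^*$-algebra on self-adjoint entries $u_{ij}$ subject to $uu^t=u^tu=1$, the orthogonality of $u$ comes for free; in particular case (1), where $D=NC_2$ and the relation attached to the pairing $\cap\in NC_2(2)$ is exactly $uu^t=u^tu=1$, requires nothing beyond this observation. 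For the three remaining cases I would only have to account for the extra relations that cut $G_n$ out of $O_n^+$.

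The key reduction is that each category $D$ is generated, over $NC_2$, by one further partition: the triple block $\{1,2,3\}$ for $S^+$ (so $D=NC$), the quadruple block $\{1,2,3,4\}$ for $H^+$ (so $D=NC_{even}$), and the singleton $\{1\}$ for $B^+$ (so $D=NC_{12}$); this is recorded in \cite{bsp}. First I would invoke the fact that the assignment $\pi\mapsto\xi_\pi$ is compatible with the categorical operations (tensor product, composition and reflection of partitions), and that these operations preserve the fixed-point spaces of any corepresentation. Consequently, once $u^{\otimes s}$ fixes the vector attached to a generating partition it automatically fixes $\xi_{\pi'}$ for every $\pi'$ in the generated category, so it suffices to impose the relation attached to the single extra generator in each case.

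It then remains to translate each generator relation, using orthogonality freely. For the singleton I would compute $\xi_{\{1\}}=\sum_ie_i$, whose fixed-point condition reads $\sum_ju_{ij}=1$, together with the reflected relation $\sum_iu_{ij}=1$; this is precisely the bistochastic condition, giving (4). For the triple block I would write out $\sum_ju_{i_1j}u_{i_2j}u_{i_3j}=\delta_{i_1i_2i_3}1$, specialize to $i_1=i_2=i_3=i$ to get $\sum_ju_{ij}^3=1$, and combine this with the orthogonality relation $\sum_ju_{ij}^2=1$ to force $u_{ij}^2=u_{ij}^3$, hence (since $u_{ij}$ is self-adjoint with spectrum in $[-1,1]$) to force $u_{ij}$ to be a projection; the off-diagonal specializations then give that the rows and columns are partitions of unity, i.e. that $u$ is magic, giving (2). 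For the quadruple block the analogous identity $\sum_ju_{i_1j}u_{i_2j}u_{i_3j}u_{i_4j}=\delta_{i_1i_2i_3i_4}1$ forces each $u_{ij}^2$ to be a projection with $\sum_ju_{ij}^2=1$, from which the cubic vanishing $u_{ij}u_{ik}=0$ on rows and $u_{ij}u_{lj}=0$ on columns should follow, giving (3).

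I expect the main obstacle to be the two positivity extractions in the last paragraph. Passing from the polynomial identities attached to the triple and quadruple blocks to the clean statements that $u_{ij}$ (respectively $u_{ij}^2$) is a projection requires combining the generator relation with the second-moment relation $\sum_ju_{ij}^2=1$ and exploiting self-adjointness: for instance, writing $\sum_j(u_{ij}^2-u_{ij}^3)=0$ with each summand $u_{ij}^2(1-u_{ij})\geq 0$ by functional calculus, so that every term vanishes. The genuinely delicate point is the cubic vanishing, where the relevant products $u_{ij}^2u_{lj}^2$ involve non-commuting entries, and verifying that all of these deductions are \emph{equivalences} — so that the stated orthogonal/magic/cubic/bistochastic relations regenerate exactly $D$ rather than a strictly larger category — is what will need the most care.
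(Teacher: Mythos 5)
The paper offers no proof of this Proposition at all: the four sets of relations are quoted as the \emph{definitions} of $O_n^+,S_n^+,H_n^+,B_n^+$ from \cite{wa1}, \cite{wa2}, \cite{ahn}, \cite{bsp}, with a pointer to \cite{bsk}, \cite{bsp} for discussion. In those references the logical direction is opposite to yours: the quantum groups are defined by the orthogonal/magic/cubic/bistochastic relations, and it is then a theorem that they are easy with $D=NC_2,NC,NC_{even},NC_{12}$. You instead take easiness as the starting point and re-derive the relations from the presentation $C(G_n)=C(O_n^+)/\langle \xi_\pi\in Fix(u^{\otimes s})\rangle$ in the proof of Theorem 3.6, using the generation facts $NC=\langle NC_2,1_3\rangle$, $NC_{even}=\langle NC_2,1_4\rangle$, $NC_{12}=\langle NC_2,\uparrow\rangle$. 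This is a legitimate, genuinely different route: it buys an actual derivation where the paper has only a citation, at the cost of having to import the generation statements and the Tannakian closure argument (that fixing $\xi_\pi$ for the generators forces fixing $\xi_{\pi'}$ for the whole category). Your $O^+$, $S^+$ and $B^+$ computations are correct as written; in particular the positivity extraction $u_{ij}^2(1-u_{ij})=u_{ij}(1-u_{ij})u_{ij}\geq 0$, summing to zero, is exactly the right trick for the magic case.

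There are two concrete soft spots. First, in the cubic case the specialization you name, $\sum_j u_{ij}^2u_{lj}^2=0$, does \emph{not} yield termwise vanishing: a product of two projections is not positive, so the argument that works for $S^+$ breaks here, as you suspected. The fix is to specialize the $1_4$-relation to the symmetric index pattern $(i,l,l,i)$ with $i\neq l$, giving $\sum_j u_{ij}u_{lj}^2u_{ij}=0$ with each term $(u_{lj}u_{ij})^*(u_{lj}u_{ij})\geq 0$, hence $u_{lj}u_{ij}=0$ -- the column vanishing. Second, flat partitions $\pi\in D(s)$ only ever produce relations along \emph{columns} (the summed index is the column index), so the row halves of (2),(3),(4) need the rotated, two-line versions of the generating partitions: e.g. $1_4$ read in $P(2,2)$ gives $\delta_{ab}\,u_{ak}u_{al}=\delta_{kl}\,u_{ak}u_{bk}$, whence $u_{ak}u_{al}=0$ for $k\neq l$ on rows. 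Your appeal to compatibility of $\pi\mapsto\xi_\pi$ with the category operations does cover this (rotation is available since $u$ is orthogonal, hence self-conjugate), but it should be invoked explicitly for rotations, not only ``reflection''. Finally, the equivalence you flag at the end is the easy half and dissolves your worry about generating a strictly larger category: magic implies the $1_3$-relation, cubic the $1_4$-relation, and row/column sums $1$ the $\uparrow$-relations by direct computation, so the two ideals, hence the two presentations, coincide.
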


We refer to \cite{bsk}, \cite{bsp} for a full discussion of these relations. 

Note that the name ``cubic'' comes at the same time from the fact that $H_n^+$ is the quantum symmetry group of the cube, cf. \cite{bbc}, and from the fact that the entries of the fundamental unitary representation of $H_n^+$ satisfy the condition $u_{ij}^3=u_{ij}$. 

In what follows we will use a number of simple facts regarding these relations. First, we have ``magic $=$ cubic + bistochastic'', which follows from definitions, by using some basic $C^*$-algebra tricks. This shows that we have the following inclusions:
$$\begin{matrix}
H_n^+&\subset&O_n^+\\
\\
\cup&&\cup\\
\\
S_n^+&\subset&B_n^+
\end{matrix}$$

The second observation, which is a bit more conceptual, is the fact we have the quantum group equalities $O_n^+=<H_n^+,B_n^+>$ and $S_n^+=H_n^+\cap B_n^+$. See \cite{bsk}.

We refer to section 5 below, and more specifically to Definition 5.1 and Proposition 5.2, for a partial generalization of these notions, in the ``rectangular'' framework.

Let us go back now to the inclusions $C_\times(G_n/G_k)\subset C(G_n/G_k)$. We first work out a few simple cases, where these inclusions are isomorphisms:

\begin{proposition}
The inclusion $C_\times(G_n/G_k)\subset C(G_n/G_k)$ is an isomorphism at $n=1$, at $k=0$, at $k=n$, as well as in the following special cases:
\begin{enumerate}
\item $G=B^+$: at $k=1$.

\item $G=S^+$: at $k=1$, and at $k=2,n=3$.
\end{enumerate}
\end{proposition}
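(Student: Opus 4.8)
**The plan is to verify these isomorphisms directly, case by case, by identifying the algebras $C(G_n/G_k)$ and $C_\times(G_n/G_k)$ explicitly.** The degenerate cases ($n=1$, $k=0$, $k=n$) should collapse on inspection: at $n=1$ everything is one-dimensional; at $k=n$ the relevant index range $i>k$ is empty, so both algebras are $\mathbb{C}$; and at $k=0$ we have $H=G_n\cap U_0^+=\{1\}$, whence $C(G/H)=C(G)$ and the generators $\{u_{ij}\mid i>0\}$ are all of them, so $C_\times(G/H)=C(G)$ as well. These three I would dispatch first with a sentence or two.

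**For the substantial special cases I would reduce each inclusion to a concrete group-dual or low-dimensional computation.** First I would pin down $C(G_n/G_k)$ using the identification $C(G/H)=(h\otimes\mathrm{id})\Delta(C(G))$ (the fixed-point-via-Haar-state formula invoked in the proof of Theorem 1.5), together with Theorem 3.7, which gives $H=G_n\cap U_k^+=G_k$ in the multiplicative case; so the coaction defining $C(G_n/G_k)$ is the one induced by the honest inclusion $G_k\subset G_n$. The point in each case is that the last $n-k$ rows already generate the full fixed-point algebra. For $G=B^+$ at $k=1$, the bistochasticity relation ($\sum_j u_{ij}=1$ on each row) lets one recover the first row entries $u_{1j}$ as $1-\sum_{i>1}u_{ij}$ from the lower rows, so the row algebra $C_\times$ captures everything the relations of $B_n^+$ can produce, forcing equality with $C(G/H)$. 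For $G=S^+$ at $k=1$, I would use the magic (row/column sum $1$ of projections) relations in the same way; the $k=2,n=3$ case for $S^+$ is a genuinely small computation where I would write out the $3\times 3$ magic unitary and check by hand that the bottom row $\{u_{31},u_{32},u_{33}\}$, via the magic relations, generates the whole fixed-point algebra $C(S_3^+/S_2^+)$.

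**The method I would follow for the bistochastic/magic reductions is essentially the one behind Proposition 2.1: show the coordinates $u_{ij}$ with $i\le k$ that are needed for $C(G/H)$ already lie in the subalgebra generated by the rows $i>k$.** Concretely, for $B^+$ and $S^+$ at $k=1$, every coordinate of $C(G/H)$ is expressible through the lower rows using the sum-to-one (and projection/orthogonality) relations, so the inclusion is automatically an isomorphism. I would phrase this abstractly: the defining relations of $C(G)$ from Proposition 4.1 provide enough linear dependencies among the rows that $C_\times(G/H)$ is already closed under everything forced by those relations.

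**The main obstacle I anticipate is the case $G=S^+$, $k=2$, $n=3$.** Unlike the uniform $k=1$ and bistochastic arguments, this is a genuinely exceptional coincidence: generically the inclusion is proper (that is the whole thrust of the section), and it is only the smallness of $S_3^+$—together with the fact that $S_3^+=S_3$ is classical, since $S_n^+=S_n$ for $n\le 3$—that rescues equality here. I would exploit precisely this: because $S_3^+=S_3$ is a \emph{classical} group, Proposition 2.1 applies directly and gives $C_\times(S_3/S_2)=C(S_3/S_2)$ with no further work. Verifying that $S_3^+=S_3$ (a known fact, cf.\ \cite{wa2}) and checking that the degenerate cases are handled cleanly at the boundaries $i>k$ are where I would be most careful to avoid off-by-one errors in the index ranges.
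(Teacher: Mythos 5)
Your proposal is correct and takes essentially the same route as the paper's proof: the degenerate cases by inspection, the $k=1$ cases via the relation $u_{1j}=1-\sum_{i>1}u_{ij}$ (note this is the \emph{column}-sum condition, though bistochastic and magic matrices satisfy both, so your formula is valid), which shows $C_\times$ is all of $C(G_n)$ and hence forces equality with the intermediate algebra $C(G_n/G_1)$, and the case $G=S^+$, $k=2$, $n=3$ via Wang's fact $S_3^+=S_3$ combined with Proposition 2.1 --- exactly as in the paper. The only cosmetic slips are the row/column label and the citation of ``Theorem 3.7'' for the multiplicativity result (it is Theorem 3.5), neither of which affects the argument.
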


\begin{proof}
First, the results at $n=1$, at $k=0$, and at $k=n$ are clear from definitions. Regarding now the special cases:

(1) Since the coordinates of $B_n^+$ sum up to 1 on each column, we have the formula $u_{1j}=1-\sum_{i>1}u_{ij}$, and so the inclusion $C_\times(B_n^+/B_1^+)\subset C(B_n^+)$ is an isomorphism. Thus the inclusion $C_\times(B_n^+/B_1^+)\subset C(B_n^+/B_1^+)$ must be as well an isomorphism.

(2) By using the same argument we obtain that the inclusion $C_\times(S_n^+/S_1^+)\subset C(S_n^+/S_1^+)$ is as well an isomorphism. In the remaining case $k=2,n=3$, or more generally at any $k\in\mathbb N$ and $n<4$, it is known from Wang \cite{wa2} that we have $S_n=S_n^+$, so by Proposition 2.1 the inclusion in the statement is $C(S_n/S_k)\subset C(S_n/S_k)$, and we are done again.
\end{proof}

In the opposite direction now, we have the following result:

\begin{theorem}
Let $G=(G_n)$ be a multiplicative free quantum group. Then the inclusion $C_\times(G_n/G_k)\subset C(G_n/G_k)$ is proper, for any $n\geq 4$, $2\leq k\leq n-1$.
\end{theorem}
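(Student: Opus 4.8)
The plan is to reduce to the group dual situation of Theorem 2.5, by placing inside $G_n$ a suitable group dual subgroup on which the inclusion is visibly proper, and then to detect the failure of the inclusion for $G_n$ by sending one explicit fixed-point element down to that group dual.

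First I would build the group dual. Since $S_n^+\subset G_n$ for each of $G=O^+,S^+,H^+,B^+$, it is enough to embed $\widehat{\Gamma}\subset S_n^+$. I take $\Gamma=\langle x\rangle * \langle y\rangle$, a free product of two nontrivial finite cyclic groups, and realise $\widehat{\Gamma}\subset S_n^+$ by a block-diagonal Fourier construction: partition $\{1,\dots,n\}$ into two blocks, put a Fourier matrix on each block to form $J=\mathrm{diag}(F_p,F_q)$, and let $D$ carry the powers of $x$ on the first block and of $y$ on the second. Then $u=JDJ^*$ is a magic unitary, yielding a surjection $\rho:C(G_n)\to C(S_n^+)\to C^*(\Gamma)$. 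I arrange the blocks so that the rows $i>k$ meet exactly the second block: for $2\le k\le n-2$ take $\{1,\dots,k\}$ and $\{k+1,\dots,n\}$ (both of size $\ge 2$), and for $k=n-1$ take $\{1,\dots,n-2\}$ and $\{n-1,n\}$. In either case Theorem 2.5 gives $\Lambda=\langle y\rangle$, which is not normal in $\Gamma$ (a single free factor is never normal in a free product of two nontrivial groups), together with $\rho\big(C_\times(G_n/G_k)\big)=C^*(\Lambda)$.

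The heart of the argument is to produce an element of the full algebra $C(G_n/G_k)$ whose $\rho$-image escapes $C^*(\Lambda)$. I would use
\[
f=\sum_{a\le k}u_{aj_1}\,u_{cj_2}\,u_{aj_3},
\]
with $c>k$ fixed, $j_1,j_3$ chosen in the first block and $j_2,c$ in the second. That $f\in C(G_n/G_k)$ is a short computation with the coaction $(\pi\otimes\mathrm{id})\Delta$: the factor $u_{cj_2}$ with $c>k$ is already fixed, while summing the two ``upper'' factors over $a\le k$ contracts the $G_k$-part through the orthogonality relation $\sum_{a\le k}v_{as}v_{aw}=\delta_{sw}$, valid for all four groups. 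Applying $\rho$ and using $\rho(u_{ij})=(JDJ^*)_{ij}$, the sum over $a$ couples the two block-one factors via $\sum_a F_{ar}F_{ar''}=\delta_{r''\equiv-r}$, so that $\rho(f)$ is a combination of conjugates $x^{r}y^{m}x^{-r}$. Choosing indices so that the coefficient of $xy^{m}x^{-1}$ with $y^m\neq 1$ is a single nonzero product of Fourier entries, the corresponding group element is a reduced word, hence lies in $\Lambda'\setminus\Lambda$. Thus $\rho(f)\notin C^*(\Lambda)=\rho\big(C_\times(G_n/G_k)\big)$, forcing $f\notin C_\times(G_n/G_k)$ and proving the inclusion proper.

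I expect the main obstacle to be precisely this transfer step, and in particular the realisation that one may \emph{not} argue by surjectivity of $\rho$ on the full fixed-point algebras: the conditional expectations onto $C(G_n/G_k)$ and onto $C(\widehat{\Gamma}/H_\Gamma)$ are built from the Haar states of $G_k$ and of $H_\Gamma=\widehat{\Gamma}\cap U_k^+$ respectively, and these do not intertwine through $\rho$, since $h_{G_k}\neq h_{H_\Gamma}\circ\rho_k$ unless $H_\Gamma=G_k$. This is exactly why the argument must pin down one explicit invariant element and detect it downstairs rather than compare the algebras wholesale. The remaining points — checking that $f$ is fixed for each of $O^+,S^+,H^+,B^+$, that degree-two candidates collapse back into $C_\times(G_n/G_k)$ so that degree three is genuinely needed, and that there is no cancellation in the support of $\rho(f)$ in the two block regimes and the boundary case $k=n-1$ — are routine finite computations once $f$ and the block-Fourier group dual are fixed.
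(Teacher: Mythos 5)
Your proposal is correct, and its guiding idea --- detecting properness through a group dual subgroup $\widehat{\Gamma}\subset S_n^+$ in the spirit of Theorem 2.5 --- is exactly the one the paper credits as its inspiration; but your execution differs from the paper's in the main case $2\leq k\leq n-2$. The paper does not verify invariance of an explicit element upstairs: it passes to the auxiliary free product $A=C(G_k)*C(\mathbb Z_2)$ via the surjection $\nu:C(G_n)\to A$ induced by $diag(v,\tilde{p},1_{n-k-2})$, keeps the \emph{same} $G_k$ coacting on $A$ through $\beta$ with $\beta\nu=(id\otimes\nu)\alpha$, and proves the fixed-point surjectivity $Fix(\beta)=\nu(Fix(\alpha))$ by the conditional expectation trick --- legitimate there precisely because the acting group does not change, which is how the paper sidesteps the open Problem 4.4 that you correctly identify as the obstruction to a wholesale reduction to Theorem 2.5. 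It then takes $x=\frac{1}{k}\sum_s v_{s1}pv_{s1}$ and detects its failure to commute with $p$ after a further quotient onto $C(\mathbb Z_2)*C(\mathbb Z_2)=C^*(\mathbb Z_2*\mathbb Z_2)$; only at $k=n-1$ does the paper argue as you do, pushing the explicit invariant element $y=\frac{1}{k}\sum_s u_{sk}u_{nn}u_{sk}$ through a magic unitary onto $C^*(\mathbb Z_2*\mathbb Z_2)$, after first reducing to $G=S^+$ and using commutativity of $C_\times(S_n^+/S_{n-1}^+)$. Your route checks out at every step I can test: the invariance of $f=\sum_{a\leq k}u_{aj_1}u_{cj_2}u_{aj_3}$ follows from $\sum_{a\leq k}v_{as}v_{aw}=\delta_{sw}$, valid for all four groups; the image is $\rho(f)=\frac{1}{k(n-k)}\sum_{r,m}\omega^{(j_3-j_1)r}\zeta^{(c-j_2)m}\,x^ry^mx^{-r}$, where distinct pairs $(r,m)$ with $m\neq 0$ give distinct reduced words, so there is no cancellation and every coefficient is a nonzero root of unity over $k(n-k)$; in the boundary case $k=n-1$ the extra summation index $a=n-1$ contributes zero since the cross-block entries of $JDJ^*$ vanish, so the character sum over the first block is still complete; and escape from $C^*(\Lambda)$, the closed span of $\Lambda=\langle y\rangle$, follows from a coefficient functional, e.g.\ composing with the regular representation and the canonical trace against $xy^mx^{-1}$. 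What your version buys is uniformity: one computation handles all four quantum groups and all $2\leq k\leq n-1$ at once, with no fixed-point lemma and no reduction to $S^+$, and it lands consistently in $C^*(\Lambda')\setminus C^*(\Lambda)$ as Theorem 2.5 (3) predicts. What the paper's version buys is structural information: the identification $Fix(\beta)=\nu(Fix(\alpha))$ pins down the entire image of $C(G_n/G_k)$ in the quotient, rather than exhibiting the escape of a single element.
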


\begin{proof}
First, according to the general results explained in section 3 above, the multiplicative free quantum groups are exactly the quantum groups $G=S^+,O^+,H^+,B^+$.

We denote by $u_{ij},v_{ij}$ the standard coordinates on $G_n,G_k$, so that the canonical surjective map $\pi:C(G_n)\to C(G_k)$ is given by:
$$\pi(u_{ij})
=\begin{cases}
v_{ij}&\textup{if }i,j\leq k\\
1&\textup{if }i=j>k\\
0&\textup{otherwise}
\end{cases}$$

The standard coaction $\alpha:C(G_n)\to C(G_k)\otimes C(G_n)$ is then given by:
$$\alpha(u_{ij})
=(\pi\otimes id)\Delta(u_{ij})
=\begin{cases}
\sum_{s\leq k}v_{is}\otimes u_{sj}&\textup{ if }i\leq k\\
1\otimes u_{ij}&\textup{ if }i>k
\end{cases}$$

Consider first the case $2\leq k \leq n-2$.

Fix a nontrivial projection $p\in C(\mathbb Z_2)$, and consider the following matrix:
$$\tilde{p}=\begin{pmatrix}p&p^\perp\\ p^\perp&p\end{pmatrix}$$

Consider also the algebra $A=C(G_k)*C(\mathbb Z_2)$, and let $\nu:C(G_n)\to A$ be the surjection induced by the matrix $diag(v,\tilde{p},1_{n-k-2})$, which satisfies the relations in Proposition 4.1.

Finally, consider the coaction $\beta:A\to C(G_k)\otimes A$ given by $\beta(p)=1\otimes p$ and:
$$\beta(v_{ij})=\sum_{s=1}^kv_{is}\otimes v_{sj}$$

We have $\beta\nu=(id\otimes\nu)\alpha$, and our first claim is that we have:
$$Fix(\beta)=\nu(Fix(\alpha))$$

Indeed, ``$\supset$'' is clear from $\beta\nu=(id\otimes\nu)\alpha$, and ``$\subset$'' follows as well from $\beta\nu=(id\otimes\nu)\alpha$, by using conditional expectations, because for $x\in Fix(\beta)$ we have:
$$x\in (h\otimes id)\beta(A)=(h\otimes id)\beta\nu(C(G_n))=(h\otimes\nu)\alpha(C(G_n))=\nu(Fix(\alpha))$$

Since $\nu(C_\times(G_n/G_k))=C(\mathbb Z_2)$, as subalgebras of $A$, it suffices to find an element in $Fix(\beta)$ which is not in $C(\mathbb Z_2)$. Let:
$$x=(h\otimes id)\beta(v_{11}pv_{11})=\frac{1}{k}\sum_{s=1}^kv_{s1}pv_{s1}$$

The last identity follows from the fact that for each $G_k$ we have: 
\[ h(v_{is}v_{js}) = \frac{1}{k} \delta_{ij}\]

As $x\in Fix(\beta)$, it remains to show that $x\notin\nu(C_\times(G_n/G_k))$. Consider the morphism $\rho=\eta*id:A \to C(\mathbb Z_2)*C(\mathbb Z_2)$, where $\eta:C(G_k)\to C(\mathbb Z_2)$ is induced by $diag(\tilde{q},1_{k-2})$, with $\tilde{q}$ defined analogously to $\tilde{p}$, which satisfies the relations in Proposition 4.1.

If $x\in\nu(C_\times(G_n/G_k))$, the element $x$ would have to commute with $p$. Similarly $\rho(x)$ would have to commute with $p'=\rho(p)$. But $\rho(x)=qp'q+q^\perp p'q^\perp$, where $q$ denotes the projection generating the first copy of $C(\mathbb Z_2)$ in $C(\mathbb Z_2)*C(\mathbb Z_2)$, and it is easy to see that $qp'q+q^\perp p'q^\perp$ does not commute with $p'$, for instance by working with a concrete model of $C(\mathbb Z_2)*C(\mathbb Z_2)$ given by $C^*(\mathbb Z_2*\mathbb Z_2)$. Thus $x\notin\nu(C_\times(G_n/G_k))$, and we are done.

Let now $k=n-1$ and put:
$$y=(h\otimes id)\alpha(u_{k k}u_{nn}u_{kk})
=\frac{1}{k}\sum_{s=1}^{k}u_{sk}u_{nn}u_{sk}$$

Then $y\in C(G_n^+/G_k^+)$, and we need to show that $y$ is not in $C_\times(G_n^+/G_k^+)$. By a passing to a subgroup argument we see it suffices to do it for $G=S^+$. Assume then that we are in this case. As we know that $C_\times(S_n^+/S_k^+)$ is commutative, it suffices to show that $y$ does not commute with $u_{nn}$. So, consider the surjection $\rho':C(S_n^+)\to C(\mathbb Z_2)*C(\mathbb Z_2)$ given by the following magic unitary matrix:
$$M=\begin{pmatrix}
1_{n-4} &0&0&0&0\\
0&p&0&p^\perp\\
0&0&q&0&q^\perp\\
0&p^\perp&0&p&0\\
0&0&q^\perp&0&q\\
\end{pmatrix}$$

Here $p$ and $q$ the free projections generating $C(\mathbb Z_2)*C(\mathbb Z_2)$. Then $\rho'(u_{nn})=q$, $\rho'(y)=p^\perp qp^\perp+ pqp$, and we can finish as in the previous case.
\end{proof}

Observe that in the above proof, the idea was to use a suitable group dual subgroup $\widehat{\Gamma}\subset S_n^+$. Note that these group dual subgroups were fully classified by Bichon in \cite{bic}.

One question that we have is whether one can deduce Theorem 4.3 directly from Theorem 2.5. We would need here a positive answer to the following problem:

\begin{problem}
Let $G\subset U_n^+$, let $k\leq n$, set $H=G\cap U_k^+$, and assume that $C_\times(G/H)=C(G/H)$. Then for any $G'\subset G$ we have $C_\times(G'/H')=C(G'/H')$, where $H'=G'\cap H$.
\end{problem}

Observe that this is indeed true in the classical case, and also in the group dual case, because we have $\Lambda\triangleleft\Gamma\implies\Lambda'\triangleleft\Gamma'$, for any surjective group morphism $g\to g'$. In the general case the problem is to prove that the map $C(G/H)\to C(G'/H')$ is surjective.

\section{Universal algebras}

Let $G=O^+,S^+,H^+,B^+$ be one of the 4 multiplicative free quantum groups. In this section we further investigate these row algebras $C_\times(G_n/G_k)$, by regarding them as quotients of certain universal algebras. For $G=O^+$ the motivation comes from the ``free spheres'' introduced in \cite{bgo}, and also, indirectly, from the $G=O$ computations in \cite{bsc}. For $G=S^+$ the row algebras $C_\times(G_n/G_k)$ appear to be quite subtle combinatorial objects, and the motivation comes from the various results in \cite{bbc}, \cite{cht}, \cite{cur}, \cite{so1}.

The axiomatization of the algebras $C_\times(G_n/G_k)$ is a quite tricky task, because these algebras have a rectangular matrix of generators, which is a transposed isometry, but not much is known about the remaining conditions to be satisfied by the generators.

We have here the following definition, inspired by Proposition 4.1 above:

\begin{definition}
Associated to $k\leq n$ is the universal $C^*$-algebra $C_+(G_n/G_k)$ generated by the entries of a rectangular matrix $p=(p_{ij})_{i>k,j>0}$, subject to the following conditions:
\begin{enumerate}
\item $G=O_n^+$: $p$ is a transposed ``orthogonal isometry'', in the sense that its entries $p_{ij}$ are self-adjoint, and $pp^t=1$.

\item $G=S_n^+$: $p$ is a transposed ``magic isometry'', in the sense that $p^t$ is an orthogonal isometry, and $p_{ij}$ are projections, orthogonal on columns.

\item $G=H_n^+$: $p$ is a transposed ``cubic isometry'', in the sense that $p^t$ is an orthogonal isometry, with $xy=0$ for any $x\neq y$ on the same row of $p$

\item $G=B_n^+$: $p$ is a transposed ``stochastic isometry'', in the sense that $p^t$ is an orthogonal isometry, with sum $1$ on rows.
\end{enumerate}
\end{definition}

Observe that, since the entries $p_{ij}$ of our various rectangular matrices are assumed to be self-adjoint, we have $p^*=p^t$. Thus the condition $pp^t=1$ reads $(p^t)^*p^t=1$, so the transposed matrix $q=p^t$ must indeed satisfy the isometry condition $q^*q=1$.

Observe also that the cubic condition on transposed orthogonal isometry $p$ is equivalent to the fact that the entries $x=p_{ij}$ satisfy the ``cubic'' condition $x^3=x$.

Note that we have surjective maps $C_+(G_n/G_k)\to C_\times(G_n/G_k)$, for any $G$ and any $k\leq n$. This follows indeed by comparing Proposition 4.1 and Definition 5.1.

As a fourth observation, the canonical map $C_+(G_n/G_0)\to C_\times(G_n/G_0)=C(G_n)$ is an isomorphism for $G=S^+,H^+$. For $G=O^+,B^+$ the situation is quite unclear, and finding better axioms in these cases is a question that we would like to raise here.

Finally, observe that in the case $G=O^+$ and $k=n-1$ we obtain the algebra of functions on the ``free sphere'', constructed and studied in \cite{bgo}. This will be actually our guiding example, the results obtained below being partly inspired by those in \cite{bgo}.

We will need the following key observation:

\begin{proposition}
For a transposed orthogonal isometry $p$, the following are equivalent:
\begin{enumerate}
\item $p$ is magic.

\item $p$ is cubic and stochastic.
\end{enumerate}
\end{proposition}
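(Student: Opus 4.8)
The plan is to fix notation and rewrite all three conditions as explicit relations among the self-adjoint entries $p_{ij}$, where $i$ runs over $\{k+1,\dots,n\}$ and $j$ over $\{1,\dots,n\}$. Writing out $pp^t=1$, the standing hypothesis that $p$ is a transposed orthogonal isometry supplies the diagonal relations $\sum_j p_{ij}^2=1$ and the off-diagonal relations $\sum_j p_{ij}p_{i'j}=0$ for $i\neq i'$. On top of this, ``magic'' means idempotency $p_{ij}^2=p_{ij}$ together with column orthogonality $p_{ij}p_{i'j}=0$ for $i\neq i'$; ``stochastic'' means $\sum_j p_{ij}=1$ for each $i$; and ``cubic'' means row orthogonality $p_{ij}p_{ij'}=0$ for $j\neq j'$. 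With this dictionary the proposition becomes the equivalence, under the standing isometry relations, of (idempotency $+$ column orthogonality) with (cubic $+$ stochastic).

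For the implication magic $\Rightarrow$ cubic and stochastic, I would first obtain stochasticity for free: substituting $p_{ij}^2=p_{ij}$ into the diagonal isometry relation gives $\sum_j p_{ij}=\sum_j p_{ij}^2=1$. Thus each row of $p$ consists of projections summing to $1$. The cubic relation is then exactly the statement that projections summing to the unit are mutually orthogonal, which I would prove by the standard positivity trick: from $p_{ij}=p_{ij}\big(\sum_{j'}p_{ij'}\big)p_{ij}$ one obtains $\sum_{j'\neq j}p_{ij}p_{ij'}p_{ij}=0$, each summand equals $(p_{ij}p_{ij'})(p_{ij}p_{ij'})^*\geq 0$ and hence vanishes, so $\|p_{ij}p_{ij'}\|^2=\|p_{ij}p_{ij'}p_{ij}\|=0$.

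For the converse, cubic and stochastic $\Rightarrow$ magic, I would first recover idempotency purely algebraically: multiplying the row-sum relation $\sum_{j'}p_{ij'}=1$ on the left by $p_{ij}$ and using cubic to kill the cross terms yields $p_{ij}^2=p_{ij}$, so with self-adjointness each $p_{ij}$ is a projection. Column orthogonality then comes from the off-diagonal isometry relation: for $i\neq i'$, sandwich $\sum_j p_{ij}p_{i'j}=0$ between $p_{im}$ on the left and $p_{i'm}$ on the right; cubic forces $p_{im}p_{ij}=\delta_{jm}p_{im}$ and $p_{i'j}p_{i'm}=\delta_{jm}p_{i'm}$, so only the $j=m$ term survives and one reads off $p_{im}p_{i'm}=0$.

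The genuinely non-formal step, and the one I expect to be the crux, is the passage from ``projections summing to $1$'' to ``mutually orthogonal projections'' in the forward direction; everything else is bookkeeping with the isometry relations, and the converse is entirely algebraic once cubic is available to collapse the sums. I would also remark that the forward direction never actually uses column orthogonality: idempotency together with the isometry relations already forces both stochasticity and cubicity, which is precisely why the equivalence comes out so clean.
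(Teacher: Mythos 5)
Your proof is correct and follows essentially the same route as the paper: stochasticity from the diagonal relation of $pp^t=1$ plus idempotency, cubicity via the standard positivity trick for projections summing to $1$, and the converse by collapsing the row sum against a single entry. You are in fact slightly more thorough than the paper, whose proof of $(2)\Rightarrow(1)$ stops once the row entries are shown to be projections and never explicitly checks the column orthogonality required by the definition of a magic isometry --- your sandwich computation $p_{im}\bigl(\sum_j p_{ij}p_{i'j}\bigr)p_{i'm}=p_{im}p_{i'm}=0$ supplies exactly that implicit step.
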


\begin{proof}
As already mentioned in section 4 above, at $k=n$ this result is well-known. In the general case the proof is similar, by using some basic $C^*$-algebra tricks:

$(1)\implies (2)$. Assume indeed that $p$ is magic. The transposed isometry condition $pp^t=1$ tells us that we have $\sum_jp_{ij}p_{kj}=\delta_{ik}$. At $i=k$ we get $\sum_jp_{ij}^2=1$, and since the elements $p_{ij}$ are projections, this condition becomes $\sum_jp_{ij}=1$. Thus $p$ is stochastic.

With this observation in hand, and since projections summing up to 1 must commute, we conclude that the elements $p_{ij}$ mutually commute on rows, so $p$ is cubic as well.

$(2)\implies (1)$. Assume that $p$ is cubic and stochastic. Since the elements $p_{i1},\ldots,p_{in}$ are self-adjoint, satisfy $xy=0$, and sum up to 1, they are projections, and we are done.
\end{proof}

We recall from section 1 that the homogeneous spaces of type $G/H$ are endowed with right coactions. Such a coaction is a morphism of $C^*$-algebras $\alpha:A\to A\otimes C(G)$ satisfying $(\alpha\otimes id)\alpha=(id\otimes\Delta)\alpha$, and such that $\alpha(A)(1\otimes C(G))$ is dense in $A\otimes C(G)$.

Given a trace $\varphi:A\to\mathbb C$, we denote by $A_{red}$ the quotient of $A$ by the null ideal of $\varphi$. Equivalently, $A_{red}$ is obtained by the GNS construction with respect to $\varphi$.

We recall also that the abelianized version of an algebra $A$ is the algebra $A_{class}$ obtained by quotienting $A$ by its commutator ideal.

\begin{theorem}
The algebras $C_+(G_n/G_k)$ and $C_\times(G_n/G_k)$ have the following properties:
\begin{enumerate}
\item They have coactions of $G_n$, given by $\alpha(p_{ij})=\sum_sp_{is}\otimes u_{sj}$.

\item They have unique $G_n$-invariant states, which are tracial.

\item Their reduced algebra versions are isomorphic.

\item Their abelianized versions are isomorphic.
\end{enumerate}
\end{theorem}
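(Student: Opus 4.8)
Write $A=C_+(G_n/G_k)$ and $B=C_\times(G_n/G_k)$, with $\theta\colon A\to B$ the canonical surjection $p_{ij}\mapsto u_{ij}$. For (1) I would build the coaction on $A$ from the universal property: the entries $P_{ij}=\sum_s p_{is}\otimes u_{sj}$ of the ``matrix product'' $p\otimes u$ again form a transposed orthogonal, magic, cubic or stochastic isometry over $A\otimes C(G_n)$, the isometry identity $PP^t=1$ coming from $pp^t=1$ together with $\sum_j u_{sj}u_{tj}=\delta_{st}$, and the extra magic/cubic/stochastic relations from the corresponding relations for $u$ (for instance $\sum_j u_{sj}=1$ in the stochastic case). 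Hence $\alpha(p_{ij})=P_{ij}$ extends to a $*$-homomorphism; coassociativity is checked on generators using $\Delta$ of $u$, and nondegeneracy follows on multiplying $\alpha(p_{ij})$ by $1\otimes u_{\ell j}^*$ and summing over $j$ to recover $p_{i\ell}\otimes 1$. On $B$ the coaction is the restriction of $\Delta$, which preserves $B$ because $\Delta(u_{ij})=\sum_s u_{is}\otimes u_{sj}$ only involves rows $>k$, and $\theta$ intertwines the two coactions by construction.

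For (2), existence of an invariant state is formal: for any state $\psi$ the functional $(\psi\otimes h)\alpha$ is $G_n$-invariant, by the standard computation using $(h\otimes\mathrm{id})\Delta=h(\cdot)1$. The real point is uniqueness, which I would obtain from ergodicity of $\alpha$, i.e.\ from $A^\alpha=\mathbb C1$ (recall $A^\alpha=(\mathrm{id}\otimes h)\alpha(A)$, as in the proof of Theorem 1.6), since invariant states correspond bijectively to states on the fixed-point algebra. Expanding the conditional expectation $E=(\mathrm{id}\otimes h)\alpha$ on a word $p_{i_1j_1}\cdots p_{i_rj_r}$ and inserting the Collins--\'Sniady--Weingarten formula of \cite{csn}, \cite{bsp} for $h(u_{s_1j_1}\cdots u_{s_rj_r})$, the whole computation reduces to the key claim that for every $\pi\in D(r)$ the contracted product $\sum_s\delta_\pi(s)\,p_{i_1s_1}\cdots p_{i_rs_r}$ equals the scalar $\delta_\pi(i)1$. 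Granting this, $E$ is scalar-valued, the coaction is ergodic, the invariant state is unique, and its moments are $\sum_{\pi,\sigma}\delta_\pi(i)\delta_\sigma(j)W(\pi,\sigma)=h(u_{i_1j_1}\cdots u_{i_rj_r})$; in particular the unique invariant state on $A$ is $\varphi_A=\varphi_B\circ\theta$ with $\varphi_B=h|_B$. Traciality is then inherited from the Haar state, tracial since $G_n$ is of Kac type, via $\varphi_A(xy)=h(\theta(x)\theta(y))=h(\theta(y)\theta(x))=\varphi_A(yx)$.

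Parts (3) and (4) should then follow formally. Since $\varphi_A=\varphi_B\circ\theta$, the GNS representation of $A$ factors as $\pi_{\varphi_A}=\pi_{\varphi_B}\circ\theta$, whence $A_{red}=\pi_{\varphi_A}(A)=\pi_{\varphi_B}(\theta(A))=\pi_{\varphi_B}(B)=B_{red}$, giving (3). For (4) I would identify both abelianizations with $C(\mathcal S)$, where $\mathcal S$ is the classical compact space of real $(n-k)\times n$ transposed $G$-isometries: by Gelfand, $A_{class}=C(\mathcal S)$ directly from Definition 5.1 with commuting entries, while abelianizing $B\subset C(G_n)$ and using Theorem 3.6 (so that $H$ abelianizes to $G_k$) together with Proposition 2.1 realizes the image of $B$ in $C(G_n^{\mathrm{cl}})$ as $C(G_n^{\mathrm{cl}}/G_k^{\mathrm{cl}})=C(\mathcal S)$. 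This yields surjections $C(\mathcal S)=A_{class}\to B_{class}\to C(\mathcal S)$ whose composite fixes each coordinate generator and is therefore the identity, so both maps are isomorphisms and $\theta_{class}$ identifies $A_{class}$ with $B_{class}$.

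The main obstacle is precisely the key lemma in (2). For $G=O^+$ all blocks are pairs and the claim is immediate from $pp^t=1$; for $G=S^+$ it follows from the projection, column-orthogonality and row-sum relations. The delicate cases are $H^+$ and $B^+$, where Definition 5.1 imposes relations only along the rows of $p$: here one first observes that $p^tp$ is an idempotent (because $pp^t=1$) which, combined with the row relations, forces the ``column'' projections $p_{is}^2$ in distinct rows to be orthogonal; only after extracting these derived relations does the block-by-block contraction of a noncrossing $\pi$ collapse to $\delta_\pi(i)1$. Establishing these derived relations, and organizing the induction on the blocks of $\pi$, is the one genuinely computational part of the argument; everything else is universal-property bookkeeping, GNS functoriality, or a direct appeal to Proposition 2.1 and Theorem 3.6.
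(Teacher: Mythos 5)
Your parts (1)--(3) follow the paper's proof essentially verbatim: the same coaction $\alpha(p_{ij})=\sum_s p_{is}\otimes u_{sj}$ checked case by case on the defining relations, the same Weingarten-based ergodicity argument $(\mathrm{id}\otimes h)\alpha=\varphi(\cdot)1$ resting on the key contraction identity $\sum_s\delta_\pi(s)\,p_{i_1s_1}\cdots p_{i_rs_r}=\delta_\pi(i)1$ for $\pi\in D(r)$, and the same GNS factorization for the reduced versions. Two remarks on your diagnosis of the key lemma. First, $B^+$ is \emph{not} a delicate case: the one-block partitions in $D$ are pairs and singletons, so the contraction identity is literally the relation $pp^t=1$ together with the row-sum relation, and no cross-row relations are needed. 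Second, for $H^+$ you are right that Definition 5.1 imposes relations only along the rows, and that cross-row column orthogonality must be \emph{derived} -- a point the paper itself glosses over with ``due to the cubic condition''. Your route via idempotency of $p^tp$ can be replaced by a one-line derivation: for $a\neq b$, multiply $\sum_l p_{al}p_{bl}=0$ (off-diagonal part of $pp^t=1$) on the right by $p_{bm}^2$; the row relations for row $b$ kill all terms with $l\neq m$, and since $p_{bm}^3=p_{bm}$ (which follows from $\sum_l p_{bl}^2=1$ and the row relations) one gets $p_{am}p_{bm}=0$. After that the block-by-block contraction collapses exactly as you say.

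The genuine gap is in (4). Your identification $C(G_n^-/G_k^-)=C(\mathcal{S})$, where $G^-$ denotes the classical version and $\mathcal{S}$ the space of classical transposed $G$-isometries, does \emph{not} follow from Proposition 2.1: that proposition only identifies $C(G_n^-/G_k^-)$ with $C(\mathcal{S}')$, where $\mathcal{S}'\subseteq\mathcal{S}$ is the \emph{image} of $G_n^-$ under the last-$(n-k)$-rows map. Your sandwich $C(\mathcal{S})=A_{\mathrm{class}}\to B_{\mathrm{class}}\to C(G_n^-/G_k^-)$ has composite equal to the restriction map $C(\mathcal{S})\to C(\mathcal{S}')$, which is the identity -- indeed, which is injective at all -- only if $\mathcal{S}'=\mathcal{S}$, i.e.\ only if every classical transposed $G$-isometry extends to an element of $G_n^-$. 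This completion statement is precisely where the paper's case-by-case work in part (4) lives: Gram--Schmidt completion of $r$ orthonormal vectors for $O$; the partial-injection count $\dim=n!/k!$ for $S$; the sign-enhanced count $2^{n-k}\,n!/k!$ for $H$; and for $B$ a separate argument using the vector $\xi=(1,\ldots,1)^t$ (a bistochastic orthogonal matrix is an orthogonal matrix fixing $\xi$, and one completes within the stabilizer of $\xi$). Each case is elementary, but none is ``a direct appeal to Proposition 2.1 and Theorem 3.6'' -- Theorem 3.6 in fact plays no role here -- so as written your part (4) asserts rather than proves the one point that carries the content.
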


\begin{proof}
We follow the proof in \cite{bgo}, where this result was proved at $G=O^+$ and $k=n-1$. The only problems, requiring some new ideas, will appear in (4) at $G=S^+,H^+$.

(1) For $C_\times(G_n/G_k)$ this is clear, because this algebra is ``embeddable'' in the sense of \cite{pod}, and the coaction of $G_n$ is simply the restriction of the comultiplication map.

For the algebra $C_+(G_n/G_k)$, consider the following elements:
$$P_{ij}=\sum_{s=1}^np_{is}\otimes u_{sj}$$

We have to check that these elements satisfy the same relations as those in Definition 5.1, presenting the algebra $C_+(G_n/G_k)$, and the proof here goes as follows:

$O^+$ case. First, since $p_{ij},u_{ij}$ are self-adjoint, so is $P_{ij}$. Also, we have:
$$\sum_jP_{ij}P_{rj}=\sum_{jst}p_{is}p_{rt}\otimes u_{sj}u_{tj}=\sum_{st}p_{is}p_{rt}\otimes\delta_{st}=\sum_sp_{is}p_{rs}\otimes 1=\delta_{ir}$$

$H^+$ case. The condition $xy=0$ on rows is checked as follows ($j\neq r$):
$$P_{ij}P_{ir}=\sum_{st}p_{is}p_{it}\otimes u_{sj}u_{tr}=\sum_sp_{is}\otimes u_{sj}u_{sr}=0$$

$B^+$ case. The sum 1 condition on rows is checked as follows:
$$\sum_jP_{ij}=\sum_{js}p_{is}\otimes u_{sj}=\sum_sp_{is}\otimes 1=1$$

$S^+$ case. Since $P^t$ is cubic and stochastic, we just check the projection condition:
$$P_{ij}^2=\sum_{st}p_{is}p_{it}\otimes u_{sj}u_{tj}=\sum_sp_{is}\otimes u_{sj}=P_{ij}$$

Summmarizing, $P$ satisfies the same conditions as $p$, so we can define a morphism of $C^*$-algebras $\alpha:C_+(G_n/G_k)\to C_+(G_n/G_k)\otimes C(G_n)$ by $\alpha(p_{ij})=P_{ij}$. We have:
\begin{eqnarray*}
(\alpha\otimes id)\alpha (p_{ij})
&=&\sum_s\alpha(p_{is})\otimes u_{sj}=\sum_{st}p_{it}\otimes u_{ts}\otimes u_{sj}\\
(id\otimes\Delta)\alpha(p_{ij})
&=&\sum_tp_{it}\otimes\Delta(u_{ij})=\sum_{st}p_{it}\otimes u_{ts}\otimes u_{sj}
\end{eqnarray*}

Thus our map $\alpha$ is coassociative. The density conditions can be checked by using dense subalgebras generated by $p_{ij}$ and $u_{st}$, and we are done.

(2) For the existence part we can use the following composition, where the first two maps are the canonical ones, and the map on the right is the integration over $G_n$:
$$C_+(G_n/G_k)\to C_\times(G_n/G_k)\subset C(G_n)\to\mathbb C$$

Also, the uniqueness part is clear for the algebra $C_\times(G_n/G_k)$, as a particular case of the general properties of ``embeddable'' coactions, i.e. those coactions that can be realized as coactions on subalgebras of $C(G)$, via the restriction of the comultiplication.

For the uniqueness for $C_+(G_n/G_k)$, we use a method from \cite{bgo}. Let $h$ be the Haar state on $G_n$, and $\varphi$ be the $G_n$-invariant state constructed above. We claim that $\alpha$ is ergodic:
$$(id\otimes h)\alpha=\varphi(.)1$$

Indeed, let us recall from \cite{bsp} that the Haar state $h$ is given by the following ``Weingarten formula'', where $W_{sn}=G_{sn}^{-1}$, with $G_{sn}(\pi,\sigma)=n^{|\pi\vee\sigma|}$:
$$h(u_{i_1j_1}\ldots u_{i_sj_s})=\sum_{\pi,\sigma\in D(s)}\delta_\pi(i)\delta_\sigma(j)W_{sn}(\pi,\sigma)$$

Now, let us go back now to our claim. By linearity it is enough to check the above equality on a product of basic generators $p_{i_1j_1}\ldots p_{i_sj_s}$. The left term is as follows:
\begin{eqnarray*}
(id\otimes h)\alpha(p_{i_1j_1}\ldots p_{i_sj_s})
&=&\sum_{l_1\ldots l_s}p_{i_1l_1}\ldots p_{i_sl_s}h(u_{l_1j_1}\ldots u_{l_sj_s})\\
&=&\sum_{l_1\ldots l_s}p_{i_1l_1}\ldots p_{i_sl_s}\sum_{\pi,\sigma\in D(s)}\delta_\pi(l)\delta_\sigma(j)W_{sn}(\pi,\sigma)\\
&=&\sum_{\pi,\sigma\in D(s)}\delta_\sigma(j)W_{sn}(\pi,\sigma)\sum_{l_1\ldots l_s}\delta_\pi(l)p_{i_1l_1}\ldots p_{i_sl_s}
\end{eqnarray*}

Let us look now at the sum on the right. We have to sum the elements of type $p_{i_1l_1}\ldots p_{i_sl_s}$, over all multi-indices $l=(l_1,\ldots,l_s)$ which fit into our partition $\pi\in D(s)$. In the case of a one-block partition this sum is simply $\Sigma_lp_{i_1l}\ldots p_{i_sl}$, and we claim that:
$$\sum_lp_{i_1l}\ldots p_{i_sl}=\delta_\pi(i)$$

Indeed, by using the explicit description of the sets of diagrams $D(s)$ given in section 3 above, the proof of this formula goes as follows:

$O^+$ case. Here our one-block partition must be a semicircle, $\pi=\cap$, and the formula to be proved, namely $\sum_lp_{il}p_{jl}=\delta_{ij}$, follows from $pp^t=1$.

$S^+$ case. Here our one-block partition can be any $s$-block, $1_s\in P(s)$, and the formula to be proved, namely $\Sigma_lp_{i_1l}\ldots p_{i_sl}=\delta_{i_1,\ldots,i_s}$, follows from orthogonality on columns, and from the fact that the sum is 1 on rows.

$B^+$ case. Here our one-block partition must be a semicircle or a singleton. We are already done with the semicircle, and for the singleton the formula to be proved, namely $\sum_lp_{il}=1$, follows from the fact that the sum is 1 on rows.

$H^+$ case. Here our one-block partition must have an even number of legs, $s=2r$, and due to the cubic condition the formula to be proved reduces to $\sum_lp_{il}^{2r}=1$. But since $p_{il}^{2r}=p_{il}^2$, independently on $r$, the result follows from the orthogonality on rows.

In the general case now, since $\pi$ noncrossing, the computations over the blocks will not interfere, and we will obtain the same result, namely:
$$\sum_lp_{i_1l}\ldots p_{i_sl}=\delta_\pi(i)$$

Now by plugging this formula into the computation that we have started, we get:
\begin{eqnarray*}
(id\otimes h)\alpha(p_{i_1j_1}\ldots p_{i_sj_s})
&=&\sum_{\pi,\sigma\in D(s)}\delta_\pi(i)\delta_\sigma(j)W_{sn}(\pi,\sigma)\\
&=&h(u_{i_1j_1}\ldots u_{i_sj_s})\\
&=&\varphi(p_{i_1j_1}\ldots p_{i_sj_s})
\end{eqnarray*}

This finishes the proof of our claim. So, let us get back now to the original question. Let $\tau:C_+(G_n/G_k)\to\mathbb C$ be a linear form as in the statement. We have:
$$\tau (id\otimes h)\alpha(x)
=(\tau\otimes h)\alpha(x)
=h(\tau\otimes id)\alpha(x)
=h(\tau(x)1)
=\tau(x)$$

On the other hand, according to our above claim, we have as well:
$$\tau(id\otimes h)\alpha(x)=\tau(\varphi(x)1)=\varphi(x)$$

Thus we get $\tau=\varphi$, which finishes the proof of the uniqueness assertion.

(3) This follows from the uniqueness assertions in (2), and from some standard facts regarding the reduced versions with respect to Haar states, from \cite{wo3}.

(4) We denote by $G^-$ the classical version of $G$, given by $G^-=O,S,H,B$ in the cases $G=O^+,S^+,H^+,B^+$. We have surjective morphisms of algebras, as follows:
$$C_+(G_n/G_k)\to C_\times(G_k/G_k)\to C_\times(G_n^-/G_k^-)=C(G_n^-/G_k^-)$$

Thus at the level of abelianized versions, we have surjective morphisms as follows:
$$C_+(G_n/G_k)_{comm}\to C_\times(G_n/G_k)_{comm}\to C(G_n^-/G_k^-)$$

In order to prove our claim, namely that the first surjective morphism is an isomorphism, it is enough to prove that the above composition is an isomorphism.

Let $r=n-k$, and denote by $A_{n,r}$ the algebra on the left. This is by definition the algebra generated by the entries of a transposed $n\times r$ isometry, whose entries commute, and which is respectively orthogonal, magic, cubic, bistochastic. We have a surjective morphism $A_{n,r}\to C(G_n^-/G_k^-)$, and we must prove that this is an isomorphism.

$S^+$ case. Since $\#(S_n/S_k)=n!/k!$, it is enough to prove that $\dim(A_{n,r})=n!/k!$. Let $p_{ij}$ be the standard generators of $A_{n,r}$. By using the Gelfand theorem, we can write $p_{ij}=\chi(X_{ij})$, where $X_{ij}\subset X$ are certain subets of a given set $X$. Now at the level of sets the magic isometry condition on $(p_{ij})$ tells us that the matrix of sets $(X_{ij})$ has the property that its entries are disjoint on columns, and form partitions of $X$ on rows.

So, let us try to understand this property for $n$ fixed, and $r=1,2,3,\ldots$

-- At $r=1$ we simply have a partition $X=X_1\sqcup\ldots\sqcup X_n$. So, the universal model can be any such partition, with $X_i\neq 0$ for any $i$.

-- At $r=2$ the universal model is best described as follows: $X$ is the $n\times n$ square in $\mathbb R^2$ (regarded as a union of $n^2$ unit tiles) minus the diagonal, the sets $X_{1i}$ are the disjoint unions on rows, and the sets $X_{2i}$ are the disjoint unions on columns.

-- At $r\geq 3$, the universal solution is similar: we can take $X$ to be the $n^r$ cube in $\mathbb R^r$, with all tiles having pairs of equal coordinates removed, and say that the sets $X_{si}$ for $s$ fixed are the various ``slices'' of $X$ in the direction of the $s$-th coordinate of $\mathbb R^r$.

Summarizing, the above discussion tells us that $\dim(A_{n,r})$ equals the number of tiles in the above set $X\subset\mathbb R^r$. But these tiles correspond by definition to the various $r$-tuples $(i_1,\ldots,i_r)\in\{1,\ldots,n\}^r$ with all $i_k$ different, and since there are exactly $n!/k!$ such $r$-tuples, we obtain $\dim(A_{n,r})=n!/k!$, and we are done.

$H^+$ case. We can use here the same method as for $S_n^+$. This time the functions $p_{ij}$ take values in $\{-1,0,1\}$, and the algebra generated by their squares $p_{ij}^2$ coincides with the one computed above for $S_n^+$, having dimension $n!/k!$. Now by taking into account the $n-k$ possible signs we obtain $\dim(A_{n,r})\leq 2^{n-k}n!/k!=\#(H_n/H_k)$, and we are done.

$O^+$ case. We can use the same method, namely a straightforward application of the Gelfand theorem. However, instead of performing a dimension count, which is no longer possible, we have to complete here any transposed $n\times r$ isometry whose entries commute to a $n\times n$ orthogonal matrix. But this is the same as completing a system of $r$ orthogonal norm 1 vectors in $\mathbb R^n$ into an orthonormal basis of $\mathbb R^n$, which is of course possible.

$B^+$ case. Since we have a surjective map $C(O_n^+)\to C(B_n^+)$, we obtain a surjective map $C_+(O_n^+/O_k^+)\to A_{n,r}$, and hence surjective maps as follows:
$$C(O_k/O_k)\to A_{n,r}\to C(B_n/B_k)$$

Now since this composition is the canonical map $C(O_k/O_k)\to C(B_n/B_k)$, by looking at the column vector $\xi=(1,\ldots,1)^t$, which is fixed by the stochastic matrices, we conclude that the map on the right is an isomorphism, and we are done.
\end{proof}

The big problem that we have is that of understanding the kernel of the surjective map $C_+(G_n/G_k)\to C_\times(G_n/G_k)$. As already mentioned after Definition 5.1, in the cases $G=O,B$ the kernel is already present at $k=0$, where this map is proper.

In the cases $G=S,H$, however, we do not have any properness problems at $k=0$, and we believe that the axioms in Definition 5.1 are basically the good ones. However, the question of computing the kernel of $C_+(G_n/G_k)\to C_\times(G_n/G_k)$, and perhaps of improving the definition of $C_+(G_n/G_k)$, stands as an open problem, for generic values of $k$.

We would like to end with a few details in the case of the algebra $C_+(S_n^+/S_k^+)$, which seems to be a quite interesting object. First, we have the following result:

\begin{proposition}
The quotient map $C_+(S_n^+/S_k^+)\to C_\times(S_n^+/S_k^+)$ is an isomorphism at $k\in\{0,1,n-1,n\}$.
\end{proposition}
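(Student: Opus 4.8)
The plan is to treat the four values of $k$ separately, using that the surjection $C_+(S_n^+/S_k^+)\to C_\times(S_n^+/S_k^+)$, $p_{ij}\mapsto u_{ij}$, is already in hand, so in each case it suffices to produce either a dimension count or an explicit inverse. The two extreme cases are immediate: at $k=n$ the matrix $p$ has no rows, so both algebras are $\mathbb C$; and at $k=0$ the assertion is exactly the isomorphism $C_+(S_n^+/S_0^+)\to C_\times(S_n^+/S_0^+)=C(S_n^+)$ already recorded in the remark after Definition 5.1 for $G=S^+$. So the real content lies at $k=n-1$ and $k=1$.

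At $k=n-1$ there is a single row $p_{n1},\dots,p_{nn}$, so ``orthogonal on columns'' is vacuous and the only surviving relations are that the $p_{nj}$ are projections with $\sum_j p_{nj}=1$ (the $1\times1$ identity $pp^t=1$, using $p_{nj}^2=p_{nj}$). The key elementary observation is that projections summing to $1$ are automatically pairwise orthogonal: from $p_{nj}+p_{nj'}\le\sum_l p_{nl}=1$ one gets $p_{nj}p_{nj'}=0$. Hence $C_+(S_n^+/S_{n-1}^+)$ is the universal $C^*$-algebra on $n$ orthogonal projections summing to $1$, namely $\mathbb C^n$. On the other side the last row $u_{n1},\dots,u_{nn}$ of the magic unitary consists of $n$ nonzero orthogonal projections summing to $1$, so $C_\times(S_n^+/S_{n-1}^+)\cong\mathbb C^n$ as well. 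A surjection $\mathbb C^n\to\mathbb C^n$ is an isomorphism, which settles this case.

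The case $k=1$ is the most substantive. Since $S_1^+$ is trivial we have $C_\times(S_n^+/S_1^+)=C(S_n^+/S_1^+)=C(S_n^+)$, the first equality being Proposition 4.2(2); so it suffices to invert the surjection $C_+(S_n^+/S_1^+)\to C(S_n^+)$. Following the recovery trick $u_{1j}=1-\sum_{i>1}u_{ij}$ used for $B^+$ and $S^+$ in Proposition 4.2, I would reconstruct the missing first row inside $C_+(S_n^+/S_1^+)$ by setting $p_{1j}=1-\sum_{i\ge2}p_{ij}$ and check that $(p_{ij})_{i,j=1}^n$ is a magic unitary. Here column orthogonality of the $p_{ij}$, $i\ge2$, makes $\sum_{i\ge2}p_{ij}$ a projection, so each $p_{1j}$ is a projection orthogonal to the entries above it and the full columns sum to $1$; the relation $pp^t=1$ gives row sums $1$ for the rows $i\ge2$, whence row $1$ also sums to $1$ and, by the orthogonality-forcing observation above, all rows are orthogonal. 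The universal property of $C(S_n^+)$ then yields a morphism $C(S_n^+)\to C_+(S_n^+/S_1^+)$, $u_{ij}\mapsto p_{ij}$, and a check on generators (using $u_{1j}=1-\sum_{i\ge2}u_{ij}$ in $C(S_n^+)$) shows the two maps are mutually inverse.

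I expect the main obstacle to be the verification at $k=1$ that the completed matrix $(p_{ij})$ genuinely satisfies the magic unitary relations --- in particular that the reconstructed first row consists of projections orthogonal among themselves and to the given rows --- since this is precisely what guarantees the inverse morphism exists; once this is in place the composition identities are formal. The recurring technical device in all cases is the elementary $C^*$-fact that projections with sum $1$ are orthogonal, which is what upgrades the isometry and stochasticity relations of Definition 5.1 into the full magic structure.
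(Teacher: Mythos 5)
Your proposal is correct and takes essentially the same approach as the paper, whose one-line proof simply identifies both algebras as $C(S_n^+)$ at $k=0,1$, as $\mathbb{C}^n$ at $k=n-1$, and as $\mathbb{C}$ at $k=n$. Your row-completion $p_{1j}=1-\sum_{i\geq 2}p_{ij}$ at $k=1$ and the observation that projections summing to $1$ are automatically orthogonal at $k=n-1$ are precisely the verifications left implicit there.
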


\begin{proof}
Indeed, at $k=0,1$, at $k=n-1$ and at $k=n$ the quotient map in the statement is respectively the identity of the algebras $C(S_n^+)$, $\mathbb C^n$, and $\mathbb C$.
\end{proof}

The key problem is to decide what happens at $2\leq k\leq n-2$. The first problem appears at $n=4,k=2$, where we do not know if the above surjection is proper or not. The matrix model for $C(S_4^+)$ found in \cite{bco} can be probably used here, but the subject is quite tricky and technical, and we do not have further results on this question.

We do not know either whether the properness property at $n=4,k=2$ would imply the properness property at any $2\leq k\leq n-2$.

In addition, with $r=n-k$ we believe the isomorphism question to be equivalent to the fact that ``any $n\times r$ magic isometry can be completed to a $n\times n$ magic unitary''. However, we do not know how to prove or disprove this statement, nor how to prove that this statement is indeed equivalent to the isomorphism question for the above maps.

Observe that the usual symmetric group $S_n$ cannot help here, because at the level of abelianized versions we always have isomorphism, as explained in Theorem 5.3 above.

\section{Concluding remarks}

We have seen in this paper that the study of quantum homogeneous spaces $G/H$, where $G\subset U_n^+$ is quantum subgroup and $H=G\cap U_k^+$ with $k\leq n$, leads to some interesting combinatorics, and to a number of concrete results. We have the following questions:

(1) Is it possible to generalize Theorem 2.5, by using some suitable discrete quantum group notions of  normality and normal closure? In principle the normality condition $\Lambda\triangleleft\Gamma$ can be formulated in functional analytic terms as $C_0(\Lambda\backslash\Gamma)=C_0(\Gamma/\Lambda)$, cf. \cite{wa3}, but we do not know if this can help in connection with our problem.

(2) In the easy case, when is the map $C_+(G/H)\to C_\times(G/H)$ proper? Note that an answer for $G=O_n^+$ and $k=n-1$ would clarify the axiomatization of the free spheres in \cite{bgo}, and an answer for $G=S_n^+$ would probably bring some advances on the (quite untractable) free hypergeometric laws, recently introduced in \cite{bbc}.

(3) Does the situation $C_+(G/H)\to C_\times(G/H)\subset C(G/H)$, as described in Theorem 5.3, extend beyond the easy case? One may wonder for a ``category of algebras of type $C_\alpha(G/H)$'', with a maximal and minimal object, as in Theorem 5.3. This problem is quite abstract, and we do not have further results here.

(4) What is the precise definition, and the general theory, of double coset spaces of type $H\backslash G/H$? An answer here for $G=S_n^+$, $H=S_k^+$ would be very interesting in connection with the free hypergeometric laws, introduced in \cite{bbc}. The present paper is a first step towards such a theory, but we do not have further results.


\begin{thebibliography}{99}

\bibitem{ahn}T. Banica, J. Bichon and B. Collins, The hyperoctahedral quantum group, {\em J. Ramanujan Math. Soc.} {\bf 22} (2007), 345--384.

\bibitem{bbc}T. Banica, J. Bichon and S. Curran, Quantum automorphisms of twisted group algebras and free hypergeometric laws, {\em Proc. Amer. Math. Soc.} {\bf 139} (2011), 3961--3971.

\bibitem{bco}T. Banica and B. Collins, Integration over the Pauli quantum group, {\em J. Geom. Phys.} {\bf 58} (2008), 942--961.

\bibitem{ez1}T. Banica, S. Curran and R. Speicher, Classification results for easy quantum groups, {\em Pacific J. Math.} {\bf 247} (2010), 1--26.

\bibitem{ez2}T. Banica, S. Curran and R. Speicher, Stochastic aspects of easy quantum groups, {\em Probab. Theory Related Fields} {\bf 149} (2011), 435--462.

\bibitem{ez3}T. Banica, S. Curran and R. Speicher, De Finetti theorems for easy quantum groups, {\em Ann. Probab.} {\bf 40} (2012), 401--435.

\bibitem{bgo}T. Banica and D. Goswami, Quantum isometries and noncommutative spheres, {\em Comm. Math. Phys.} {\bf 298} (2010), 343--356.

\bibitem{bsc}T. Banica and J.-M. Schlenker, Combinatorial aspects of orthogonal group integrals, {\em Internat. J. Math.} {\bf 22} (2011), 1611--1646.

\bibitem{bsk}T. Banica and A. Skalski, Two-parameter families of quantum symmetry groups, {\em J. Funct. Anal.} {\bf 260} (2011), 3252--3282.

\bibitem{bsp}T. Banica and R. Speicher, Liberation of orthogonal Lie groups, {\em Adv. Math.} {\bf 222}  (2009), 1461--1501.

\bibitem{bpa}H. Bercovici and V. Pata, Stable laws and domains of attraction in free probability theory, {\em Ann. of Math.} {\bf 149} (1999), 1023--1060.

\bibitem{bic}J. Bichon, Algebraic quantum permutation groups, {\em Asian-Eur. J. Math.} {\bf 1} (2008), 1--13.

\bibitem{boc}F. Boca, Ergodic actions of compact matrix pseudogroups on C$^*$-algebras, {\em Ast\'erisque} {\bf 232} (1995), 93--109.

\bibitem{bra}R. Brauer, On algebras which are connected with the semisimple continuous groups, {\em Ann. of Math.} {\bf 38} (1937), 857--872.

\bibitem{cco}A.H. Chamseddine and A. Connes, The spectral action principle, {\em Comm. Math. Phys.} {\bf 186} (1997), 731--750.

\bibitem{cht}B. Collins, J. H\"artel and A. Thom, Homology of free quantum groups, {\em C. R. Math. Acad. Sci. Paris} {\bf 347} (2009), 271--276.

\bibitem{csn}B. Collins and P. \'Sniady, Integration with respect to the Haar measure on unitary, orthogonal and symplectic groups, {\em Comm. Math. Phys.} {\bf 264} (2006), 773--795.

\bibitem{co1}A. Connes, Noncommutative geometry, Academic Press (1994).

\bibitem{co2}A. Connes, Gravity coupled with matter and foundation of noncommutative geometry, {\em Comm. Math. Phys.} {\bf 182} (1996), 155--176.

\bibitem{cdu}A. Connes and M. Dubois-Violette, Noncommutative finite-dimensional manifolds I: spherical manifolds and related examples, {\em Comm. Math. Phys.} {\bf 230} (2002), 539--579.

\bibitem{cla}A. Connes and G. Landi, Noncommutative manifolds, the instanton algebra and isospectral deformations, {\em Comm. Math. Phys.} {\bf 221} (2001), 141--160.

\bibitem{cur}S. Curran, A characterization of freeness by invariance under quantum spreading, {\em J. Reine Angew. Math.}, to appear.

\bibitem{csp}S. Curran and R. Speicher, Quantum invariant families of matrices in free probability, {\em J. Funct. Anal.} {\bf 261} (2011), 897--933.

\bibitem{dd+}L. Dabrowski, F. D'Andrea, G. Landi and E. Wagner, Dirac operators on all Podles quantum spheres, {\em J. Noncommut. Geom.} {\bf 1} (2007), 213--239.

\bibitem{dri}V. Drinfeld, Quantum groups, Proc. ICM Berkeley (1986), 798--820.

\bibitem{fst}U. Franz, A. Skalski and R. Tomatsu, Idempotent states on compact quantum groups and their classification on U$_q$(2), SU$_q$(2), and SO$_q$(3), {\em J. Noncommut. Geom.}, to appear.

\bibitem{jim}M. Jimbo, A $q$-difference analog of $U(g)$ and the Yang-Baxter equation, {\em Lett. Math. Phys.} {\bf 10} (1985), 63--69.

\bibitem{kas}P. Kasprzak, On a certain approach to quantum homogeneous spaces, {\tt arxiv:1007.2438}.

\bibitem{kls}M. Khalkhali, G. Landi and W. van Suijlekom, Holomorphic structures on the quantum projective line, {\em Int. Math. Res. Not.} {\bf 4} (2011), 851--884.

\bibitem{ntu}S. Neshveyev and L. Tuset, The Dirac operator on compact quantum groups, {\em J. Reine Angew. Math.} {\bf 641} (2010), 1--20.

\bibitem{ped}G.K. Pedersen, Extensions of $C^*$-algebras, in ``Operator algebras and quantum field theory'',  International Press (1997), 2--35.

\bibitem{pod}P. Podle\'s, Symmetries of quantum spaces. Subgroups and quotient spaces of quantum SU(2) and SO(3) groups, {\em Comm. Math. Phys.} {\bf 170} (1995), 1--20.

\bibitem{sal}P. Salmi, Compact quantum subgroups and left invariant $C^*$-subalgebras of locally compact quantum groups,  {\em J. Funct. Anal.} {\bf 261} (2011), 1--24.

\bibitem{psk}P. Salmi and A. Skalski, Idempotent states on locally compact quantum groups, {\em Q. J. Math.}, to appear.

\bibitem{so1}P.M. So\l tan, Quantum families of maps and quantum semigroups on finite quantum spaces, {\em J. Geom. Phys.} {\bf 59} (2009), 354--368.

\bibitem{so2}P.M. So\l tan, Examples of non-compact quantum group actions, {\em J. Math. Anal. Appl.} {\bf 372} (2010), 224--236.

\bibitem{tom}R. Tomatsu, Compact quantum ergodic systems, {\em J. Funct. Anal.} {\bf 254} (2008), 1--83.

\bibitem{vae}S. Vaes, A new approach to induction and imprimitivity results, {\em J. Funct. Anal.} {\bf 229} (2005), 317--374.

\bibitem{wa1}S. Wang, Free products of compact quantum groups, {\em Comm. Math. Phys.} {\bf 167} (1995), 671--692.

\bibitem{wa2}S. Wang, Quantum symmetry groups of finite spaces, {\em Comm. Math. Phys.} {\bf 195} (1998), 195--211.

\bibitem{wa3}S. Wang, Simple compact quantum groups I, {\em J. Funct. Anal.} {\bf 256} (2009), 3313--3341.

\bibitem{wen}H. Wenzl, ${\rm C}^*$ tensor categories from quantum groups, {\em J. Amer. Math. Soc.} {\bf 11} (1998), 261--282.

\bibitem{wo1}S.L. Woronowicz, Compact matrix pseudogroups, {\em Comm. Math. Phys.} {\bf 111} (1987), 613--665.

\bibitem{wo2}S.L. Woronowicz, Tannaka-Krein duality for compact matrix pseudogroups. Twisted SU(N) groups, {\em Invent. Math.} {\bf 93} (1988), 35--76.

\bibitem{wo3}S.L. Woronowicz, Compact quantum groups, in ``Sym\'etries quantiques'', North-Holland (1998), 845--884.

\end{thebibliography}
\end{document}